\documentclass{article}
\usepackage{arxiv}
\usepackage{graphicx} 
\usepackage{amsmath}  
\usepackage{amssymb}  
\usepackage{xcolor}   
\usepackage{geometry} 

\usepackage[utf8]{inputenc} 
\usepackage[T1]{fontenc}    
\usepackage{url}            
\usepackage{booktabs}       
\usepackage{amsfonts}       
\usepackage{nicefrac}       
\usepackage{microtype}      
\usepackage{lipsum}
\graphicspath{ {./img/} }
\usepackage[toc,page]{appendix}

\usepackage{times}
\usepackage{latexsym}
\usepackage{xspace}
\usepackage{multirow}
\usepackage{fontawesome5}
\usepackage{amsthm}
\usepackage{tikz}
\usetikzlibrary{graphs}
\usepackage{listings}
\usepackage{amssymb}
\usepackage{array} 
\usepackage[pagebackref]{hyperref}
\usepackage{bm}
\usepackage{inconsolata}
\usepackage{algorithm,algorithmic}

\newtheorem{theorem}{Theorem}[section]
\newtheorem{lemma}[theorem]{Lemma}
\newtheorem{corollary}[theorem]{Corollary}

\theoremstyle{proposition}
\newtheorem{proposition} [theorem]{Proposition}
\theoremstyle{definition}  
\newtheorem{definition} [theorem] {Definition} 

\newtheorem{example} [theorem] {Example}


\DeclareMathOperator{\val}{val}
\DeclareMathOperator{\outdeg}{outdeg}
\DeclareMathOperator{\Div}{Div}
\DeclareMathOperator{\Prin}{Prin}
\DeclareMathOperator{\Pic}{Pic}
\DeclareMathOperator{\Jac}{Jac}

\theoremstyle{definition}  
\theoremstyle{proposition}



\usepackage{tcolorbox}
\tcbuselibrary{breakable,xparse,skins}

\usepackage{listings}
\usepackage{xcolor}
\lstset{
  language=Python,
  basicstyle=\ttfamily\small,
  keywordstyle=\color{blue},
  stringstyle=\color{orange},
  commentstyle=\color{gray},
  showstringspaces=false,
  frame=single,
  breaklines=true
}

\title{\texttt{chipfiring}: A Python Package for Efficient Mathematical Analysis of Chip-Firing Games on MultiGraphs}

\author{
 Dhyey Dharmendrakumar Mavani \\
  Departments of Mathematics, \\ 
  Statistics and Computer Science \\
  Amherst College \\
  Amherst, MA, 01002 \\
  \texttt{ddmavani2003@gmail.com} \\
   \And
Tairan (Ryan) Ji \\
  Department of Computer Science \\
  Amherst College\\
  Amherst, MA 01002 \\
  \texttt{tji26@amherst.edu} \\
  \And
   Nathan Pflueger \\
  Department of Mathematics\\
  Amherst College\\
  Amherst, MA 01002 \\
  \texttt{npflueger@amherst.edu} \\
}

\begin{document}
\maketitle
\begin{abstract}
This paper presents \texttt{chipfiring}, a comprehensive Python package for the mathematical analysis of chip-firing games on finite graphs. The package provides a robust toolkit for defining graphs and chip configurations (divisors), performing chip-firing operations, and analyzing fundamental properties such as winnability, linear equivalence, and divisor rank. We detail the core components of the library, including its object-oriented graph and divisor implementations, integrated Laplacian matrix computations, and an efficient implementation of Dhar's algorithm for determining the solvability of the dollar game. The \texttt{chipfiring} package is designed for researchers and students in graph theory, combinatorics, and algebraic geometry, providing essential algorithms and data structures for exploring these rich mathematical models. We describe the library's architecture, illustrate its usage with comprehensive examples, and highlight its specialized contributions compared to general-purpose graph libraries.
\end{abstract}

\keywords{Chip-firing \and Graph Theory \and Divisors on Graphs \and Dhar's Algorithm \and Python \and Mathematical Software}

\section{Introduction}
The chip-firing game, also known as the dollar game or sandpile model, is a discrete dynamical system on a graph that has found diverse applications across mathematics and physics, from modeling self-organized criticality \cite{dhar1990self} to providing insights into the structure of algebraic curves \cite{baker2007riemann}. In its most common formulation, vertices on a graph are endowed with a number of chips (which can be negative, representing debt), and a vertex can "fire" by sending a chip to each of its neighbors. A central problem is to determine if a given configuration of chips, or \textit{divisor}, can reach a state with no vertices in debt through a sequence of legal firing moves.

Despite the model's importance, a dedicated, user-friendly software package for its study in Python has been lacking. While general-purpose graph libraries exist, they do not provide the specialized data structures, operations, and algorithms central to chip-firing theory. The \texttt{chipfiring} package aims to fill this gap.

This paper describes our Python-based scientific software package for setting up and analyzing chip-firing games. We focus on the tools for constructing graphs and divisors, performing firing moves, and solving the winnability problem. The package provides a clean, object-oriented API with comprehensive documentation and type hints, following modern Python standards. The package also includes interactive visualizations for selected functions to help mathematicians develop intuition and assist students in learning key concepts. The package is available on PyPI, detailed documentation is deployed via ReadTheDocs, and the source code is available on the project's GitHub repository.

\begin{itemize}
    \item PyPI: \url{https://pypi.org/project/chipfiring}
    \item Documentation: \url{https://chipfiring.readthedocs.io}
    \item GitHub: \url{https://github.com/DhyeyMavani2003/chipfiring}
\end{itemize}

Since the package is readily available on the Python Package Index (PyPI), it can be installed by running:

\begin{lstlisting}[language=Python]
pip install chipfiring
\end{lstlisting}

For the rest of this paper, we will refer to version 1.1.1 of the package.

The package has minimal dependencies, requiring only NumPy for numerical computations, NetworkX for specific gonality-based algorithms, and Dash, Dash Cytoscape, and Dash Bootstrap Components for visualization capabilities. This lightweight approach ensures easy integration into existing mathematical computing environments with official support for Python versions 3.8, 3.9, 3.10, 3.11, 3.12, and 3.13, as confirmed by intermittent GitHub-Actions runners on our GitHub repository.

The \texttt{chipfiring} package is a specialized tool. While libraries like NetworkX \cite{networkx} are excellent for general graph analysis, they lack the domain-specific features for chip-firing. Table \ref{tab:comparison} provides a feature comparison.

\begin{table}[h]
 \caption{Feature comparison with general graph libraries.}
  \centering
  \begin{tabular}{lcc}
    \toprule
    \textbf{Feature} & \textbf{\texttt{chipfiring}} & \textbf{General Library (e.g., NetworkX)} \\
    \midrule
    Graph Representation & Yes & Yes \\
    Divisor/State Object & Yes & No \\
    Chip-Firing Operations & Yes & No \\
    Dhar's Algorithm & Yes & No \\
    Divisor Rank/Reduction & Yes & No \\
    Linear Equivalence Testing & Yes & No \\
    Q-Reduction Algorithm & Yes & No \\
    \bottomrule
  \end{tabular}
  \label{tab:comparison}
\end{table}

In the following sections, we will review the mathematical background, which is also summarized in \cite{mavani2025chipfiringthesis}. We will provide embedded commentaries and demonstrations on using the \texttt{chipfiring} package. Comprehensive discussion of the functions and arguments within our API framework may be found at  \url{https://chipfiring.readthedocs.io}.

The organization and notation of this paper closely follows the textbook of Corry and Perkinson \cite{corry2018divisors}, and our package consists in large part of implementing the algorithms and formalizing the abstractions therein. 
We have also drawn inspiration from the excellent exposition on chip-firing and graph gonality in \cite{beougher2023chip}, and have included some premade constructions for the reader to experiment with the graphs studied there.
Indeed, a reader new to this subject would be well served to study Corry and Perkinson's excellent textbook, or the paper \cite{beougher2023chip}, side-by-side with this paper and a Python notebook, trying experiments and examples of each new idea using the \texttt{chipfiring} package. We hope very much that some readers, who will undoubtedly encounter functionality they wish were present or possible improvements to the implementations, will contribute to the project; see Section \ref{sec:community}.

\section{Creating and visualizing graphs}
\label{sec:graphs}

Before proceeding to the details of the chipfiring game, we demonstrate the syntax for creating (multi-) graphs in the \texttt{chipfiring} package. Graphs are created by specifying a set of vertex names, represented as strings, and a list of edges with specified multiplicities. Once created, a graph can be visualy displayed using the \texttt{visualize} command. Here is a first example.

\begin{lstlisting}[language=Python]
from chipfiring import CFGraph, CFDivisor, visualize

# Define a graph
vertices = {"Alice", "Bob", "Charlie", "Elise"}
edges = [
("Alice", "Bob", 1), ("Alice", "Charlie", 1), ("Alice", "Elise", 2),
("Bob", "Charlie", 1), ("Charlie", "Elise", 1)
]
graph = CFGraph(vertices, edges)

# Note that this launches a web-based interactive visualization at localhost:8050,
# so the program will halt until the user manually closes the visualization
# (Ctrl + C in the terminal)
visualize(graph)
\end{lstlisting}

The resulting visualization is shown in Figure \ref{fig:visualization}.

\begin{figure}
    \centering
    \includegraphics[width=0.6\linewidth]{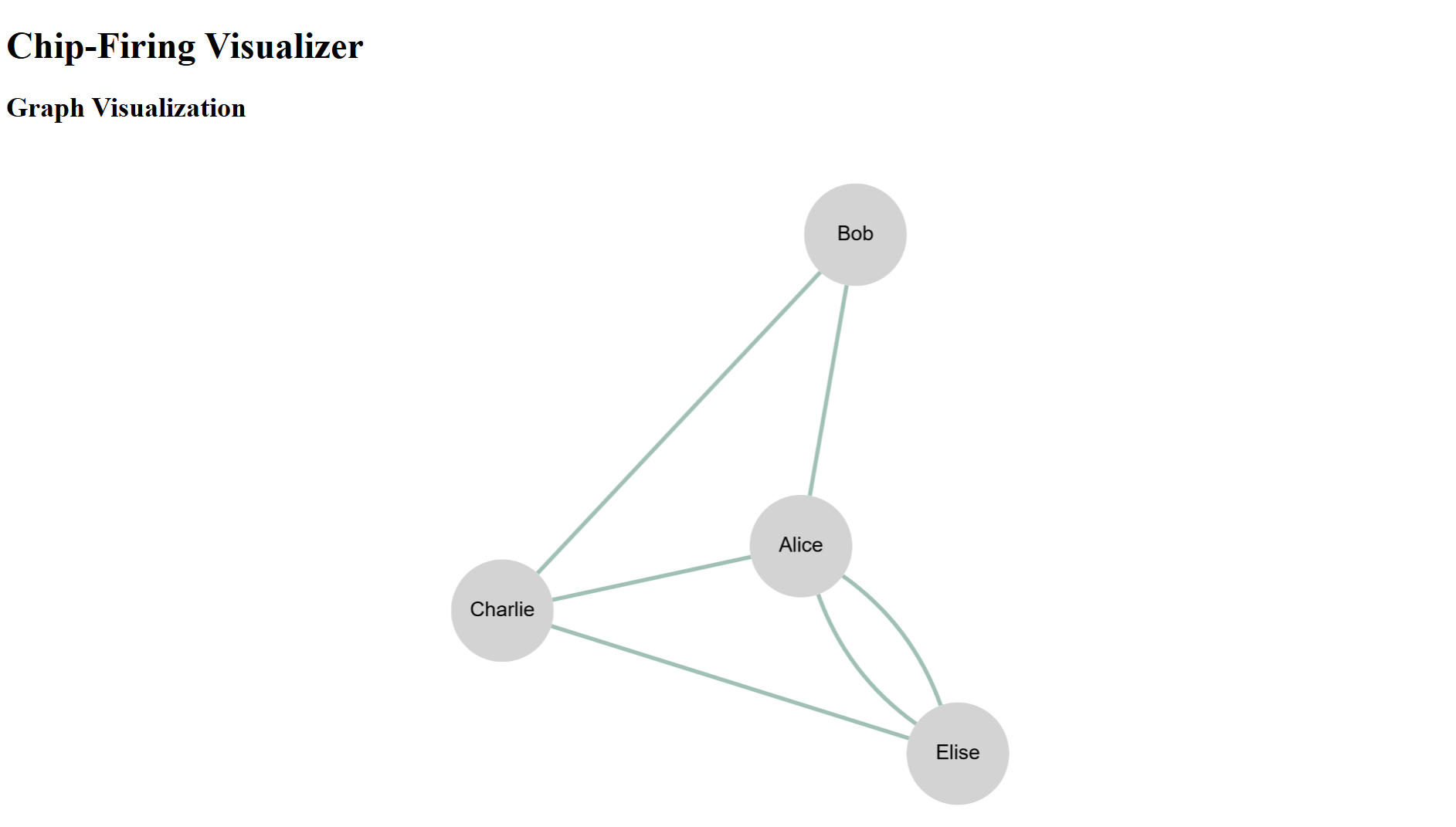} 
    \includegraphics[width=0.3\linewidth]{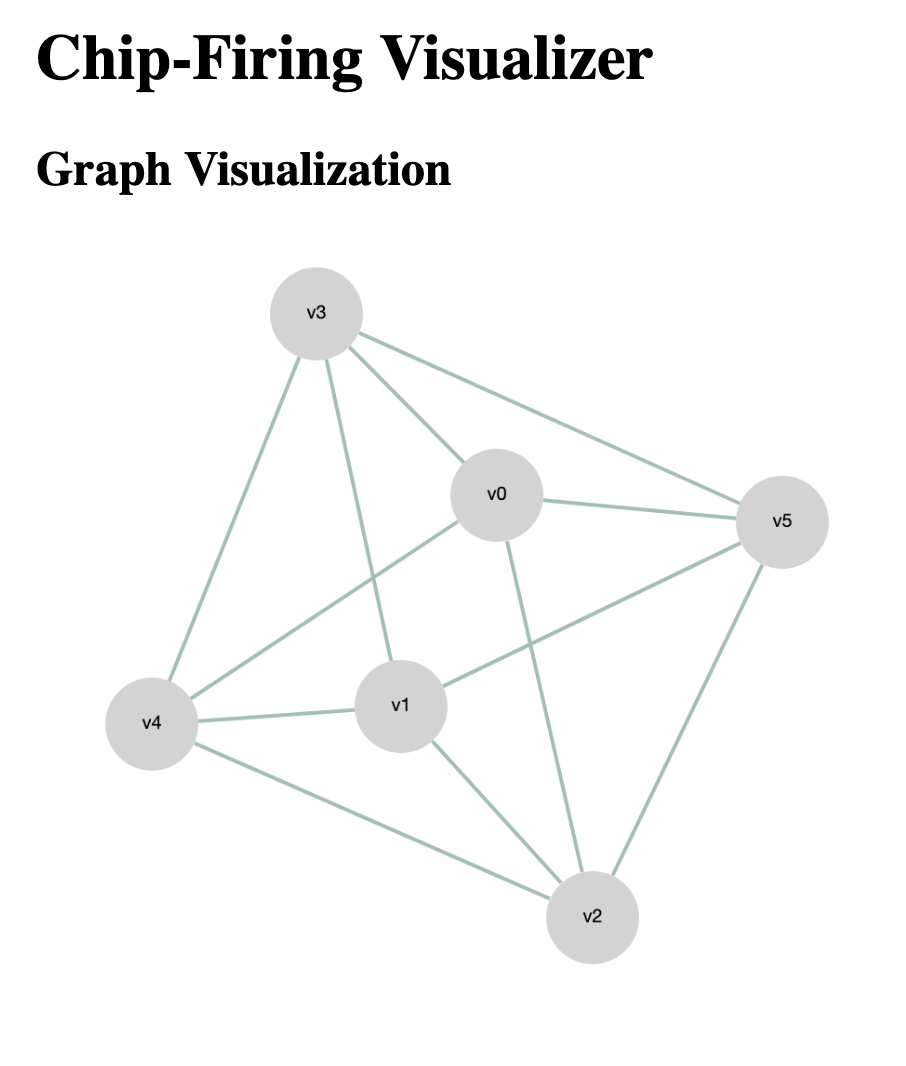}
    \caption{Screenshots of two example CFGraph visualizations.}
    \label{fig:visualization}
\end{figure}

The package also provides several pre-built graph constructions studied in \cite{beougher2023chip}, which is an excellent introduction to the study on graph gonality. The code below shows how to create these graphs, and visualize the octahedron graph. The output is shown in Figure \ref{fig:visualization}.

\begin{lstlisting}[language=Python]
from chipfiring import tetrahedron, cube, octahedron,  dodecahedron, icosahedron, visualize

# Platonic solid graphs (available for research purposes)
tetra = tetrahedron()  # K_4
cube_graph = cube()    # 3-regular, 8 vertices
octa = octahedron()    # Complete tripartite K_{2,2,2}
dodec = dodecahedron()  # 3-regular, 20 vertices
icos = icosahedron() # 5-regular, 12 vertices

# For example, we can visualize the octahedron
visualize(octa)
\end{lstlisting}

\section{The Dollar Game}
\label{ch:dollar-game}

Consider a graph $G = (V, E)$ with $V$ as a set of vertices representing people and $E$ as a set of edges representing relationships between them. The more edges between individuals, the stronger the relationship. At each vertex, we also record their wealth via an integer representing the number of dollars they have, with negative values indicating debt. The goal is to find a sequence of lending/borrowing moves after which everyone becomes debt-free. In one move, a vertex can lend money (\textit{fire}) or borrow money by taking or sending 1 unit of currency across each edge it shares in the graph. This is called the \textit{dollar game on $G$}, and if such a \textbf{sequence exists}, the game is said to be \textbf{winnable}.

Let us walk through an example in Figure \ref{fig:div-setup} to illustrate the setup better.

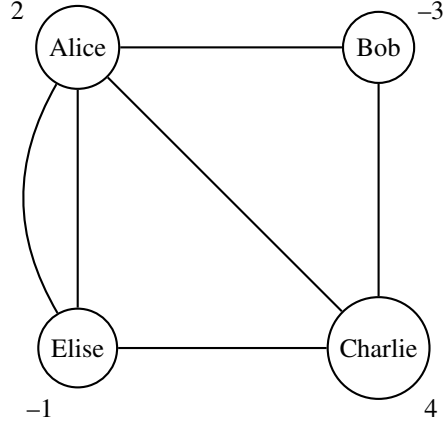
\begin{figure}[h]
    \centering
    \begin{tikzpicture}[scale=1, transform shape, thick]
      \node[circle,draw] (Alice) at (0,4) {Alice};
      \node[circle,draw] (Bob) at (4,4) {Bob};
      \node[circle,draw] (Elise) at (0,0) {Elise};
      \node[circle,draw] (Charlie) at (4,0) {Charlie};
    
      \draw (Alice) -- (Bob);
      \draw (Alice) -- (Elise);
      \draw (Alice) -- (Charlie);
      \draw (Bob) -- (Charlie);
      \draw (Elise) -- (Charlie);
      \draw (Elise) to[bend left] (Alice);
    
      \node at (-0.8,4.5) {2};
      \node at (4.7,4.5) {--3};
      \node at (-0.5,-0.8) {--1};
      \node at (4.7,-0.8) {4};
    
    \end{tikzpicture}
    \caption{Situational wealth distribution \& relationship setup.}
    \label{fig:div-setup}
\end{figure}

Alice starts with a wealth of $2$, Bob is in debt with $-3$, Charlie has $4$, and Elise is slightly in debt with $-1$. The edges between vertices represent relationships; in each turn, a person can either lend, borrow, or do nothing with \textbf{all} the edges they are connected to. For instance, Charlie can lend $1$ to each of Elise, Alice, and Bob, which takes Elise out of debt, leaving Alice with $2$ and Bob with $-2$. The game continues until no one remains in debt, and if such a redistribution is possible, the game is considered \textbf{winnable}.

Before exploring solutions to this game, let us formalize this setup. Our discussion closely follows the exposition of \cite{corry2018divisors}.

\subsection{Divisors \& Linear Equivalence}
\label{def_G}
When we mention a \textbf{graph}, we refer to a \textit{finite, connected, undirected multigraph without loop edges}. Essentially, a \textbf{multigraph} $G = (V, E)$ consists of two components: a set of vertices $V$ and a multiset of edges $E$, where each edge is an unordered pair $\{v, w\}$ representing connections between vertices. The prefix ``multi'' means that pairs like $\{v, w\}$ can appear multiple times in $E$. We often simplify notation by writing an edge as $vw$. A multigraph $G$ is considered finite if both $V$ and $E$ are finite and connected if there is a path of edges between any two vertices.

\definition (cf. \cite[Definition 1.3]{corry2018divisors}) \label{def_DivG} A \textbf{divisor} on the graph $G$ is an element of the \textit{free abelian group} on its vertices:
\[
\Div(G) = \mathbb{Z}^V = \left\{ \sum_{v \in V} D(v) v \colon D(v) \in \mathbb{Z} \right\}.
\]

Divisors can be thought of as ways to describe the wealth distribution on $G$. If $D = \sum_{v \in V} D(v) \cdot v \in \Div(G)$, then $D(v)$ gives the amount of money at the vertex (or person) $v$, with negative values representing debt. The total money in the system is captured by the \textit{degree} of the divisor as defined below.

\definition (cf.  \cite[Definition 1.4]{corry2018divisors}) \label{def_degD} The \textbf{degree} of a divisor $D = \sum_{v \in V} D(v) \cdot v \in \Div(G)$ is defined as:
\[
\deg(D) = \sum_{v \in V} D(v).
\]

For instance, from the example presented earlier in Figure \ref{fig:div-setup}, the divisor D can be represented as $D = 2(A) - 3(B) + 4(C) - (E)$, and thus the $\deg(D) = 2 - 3 + 4 - 1 = 2$.

We use $\Div^k(G)$ \label{def_DivkG} to denote all divisors with degree $k$, and $\Div_+(G)$ \label{def_Div+G} for divisors with a non-negative degree. Note that the word ``degree'' can refer to two things in our sub-domain at the intersection of graph theory and combinatorics, so for clarity, we write $\val(v)$ (valence of v) \label{def_val} for the number of edges connected to $v$.

In the package, multigraphs and divisors are represented by the \texttt{CFGraph} and \texttt{CFDivisor} classes, which are specifically designed for chip-firing operations. The setup for the example above can be reproduced as follows:

\begin{lstlisting}[language=Python]
from chipfiring import CFGraph, CFDivisor

# Define a graph
vertices = {"Alice", "Bob", "Charlie", "Elise"}
edges = [
("Alice", "Bob", 1), ("Alice", "Charlie", 1), ("Alice", "Elise", 2),
("Bob", "Charlie", 1), ("Charlie", "Elise", 1)
]
graph = CFGraph(vertices, edges)


# Define a divisor
divisor = CFDivisor(graph, [("Alice", 2), ("Bob", -3), ("Charlie", 4), ("Elise", -1)])
print(f"Total degree: {divisor.get_total_degree()}") # Output: 2
\end{lstlisting}

In addition to defining graphs and divisors directly, the package also provides comprehensive tools for importing and exporting to and from various formats; this functionality supports \texttt{CFGraph} and \texttt{CFDivisor} as well as \texttt{CFOrientation} and \texttt{CFiringscript}, which will be discussed later. Please refer to \autoref{appendix:import}, and accordingly format the files based on the I/O format of interest. Below we present an API code example to illustrate the same:

\begin{lstlisting}[language=Python]
from chipfiring import CFDataProcessor, visualize

# Initialize the data processor
processor = CFDataProcessor()

# Import graph, divisor, orientation, or firing script from json or txt
divisor = processor.read_json("divisor_input.json", "divisor")
graph = processor.read_txt("graph_input.txt", "graph")

# Export to LaTeX, json, or txt
processor.to_tex(graph, "graph_output.tex")  # LaTeX/TikZ export
processor.to_json(divisor, "divisor_data.json")  # JSON export
processor.to_txt(divisor, "divisor_data.txt")  # txt export
\end{lstlisting}

The package also provides interactive visualizations for the \texttt{CFGraph}, \texttt{CFDivisor}, and \texttt{CFOrientation} classes. An example of an \texttt{CFGraph} visualization, as generated by the code-block below, can be seen in \autoref{fig:divisorVisualization}.

\begin{lstlisting}[language=Python]
# Visualize a divisor
# As for graphs, this launches a web-based interactive visualization at localhost:8050,
# so the program will halt until the user manually closes the visualization
# (Ctrl + C in the terminal)
visualize(divisor)
\end{lstlisting}

\begin{figure}
    \centering
    \includegraphics[width=0.4\linewidth]{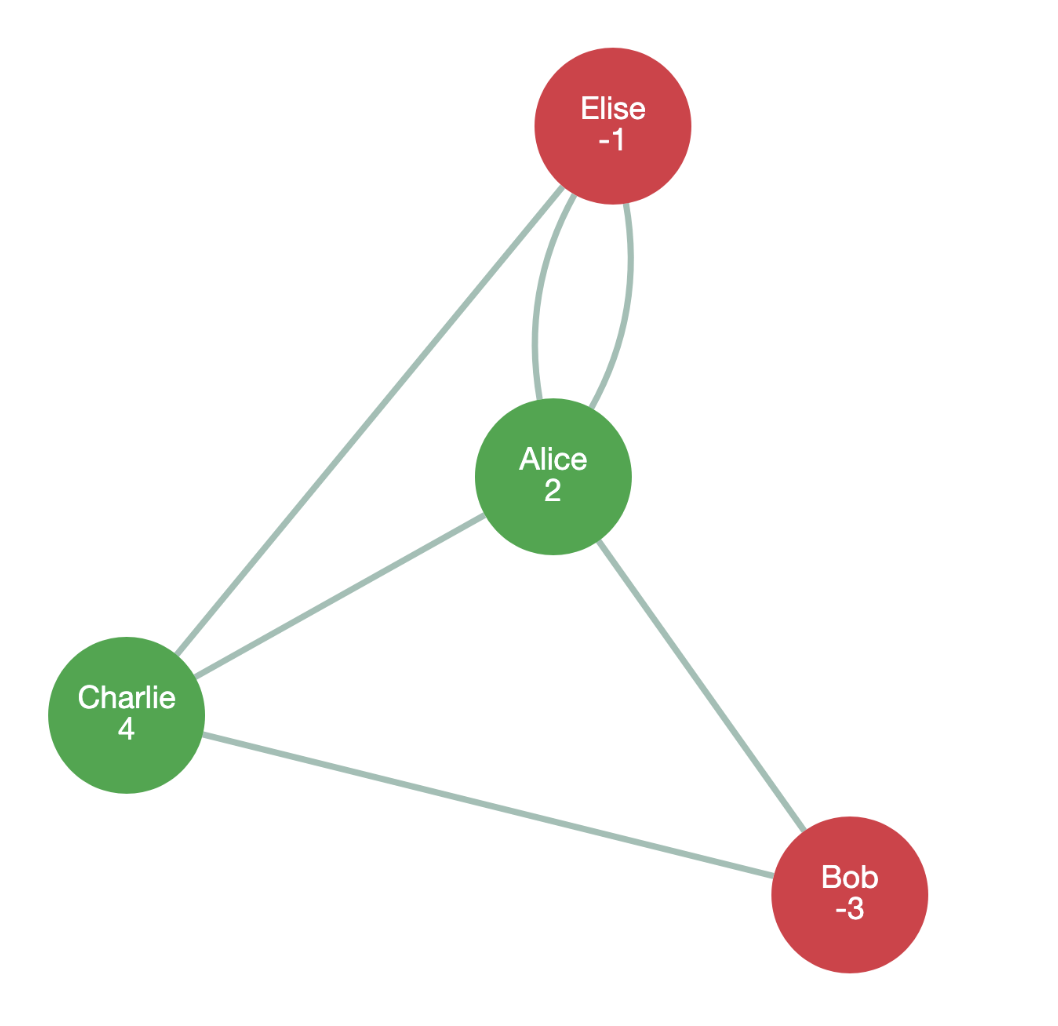}
    \caption{Screenshot of an CFDivisor visualization.}
    \label{fig:divisorVisualization}
\end{figure}

Now, let us define lending and borrowing moves for the chip-firing game. 

\definition (cf. \cite[Definition 1.5]{corry2018divisors}) \label{def-firing-move} Given divisors $D, D' \in \Div(G)$ and a vertex $v \in V$, we say $D'$ is obtained from $D$ by a \textit{lending move} at $v$, written as $D \overset{v}{\rightarrow} D'$, if:
\[
D' = D - \sum_{vw \in E} (v - w) = D - \val(v) \cdot v + \sum_{vw \in E} w.
\]
Similarly, $D'$ is obtained from $D$ by a \textit{borrowing move} at $v$, written as $D \overset{v}{\leftarrow} D'$, if:
\[
D' = D + \sum_{vw \in E} (v - w) = D + \val(v) \cdot v - \sum_{vw \in E} w.
\]

What is interesting here is that the order in which lending or borrowing happens does not matter. This gives the dollar game an \textbf{abelian property}, meaning the operations commute with each other.

\definition (cf. \cite[Definition 1.6]{corry2018divisors}) \label{def_setfiring} Suppose $D'$ is obtained from $D$ by lending from all the vertices in some subset $W \subseteq V$. In this case, we call this a \textit{set-lending} (or \textit{set-firing}) move by $W$, denoted $D \overset{W}{\longrightarrow} D'$.

Using these definitions, let us try to perform set-firing on the example divisor shown in Figure \ref{fig:div-setup} earlier in this section.

\begin{figure}[h]
    \centering
    \begin{tikzpicture}[scale=1, transform shape, thick]
      \node[circle,draw] (A1) at (0,4) {Alice};
      \node[circle,draw] (B1) at (2,4) {Bob};
      \node[circle,draw] (E1) at (0,2) {Elise};
      \node[circle,draw] (C1) at (2,2) {Charlie};
      \draw (A1)--(B1)--(C1)--(E1)--(A1);
      \draw (A1)--(C1);
      \draw (E1) to[bend left] (A1);
      \node at (-0.7,4.5) {2};
      \node at (2.6,4.5) {--3};
      \node at (-0.7,1.5) {--1};
      \node at (2.6,1.2) {4};
      \draw[red,thick,rounded corners] (-1,4.8)--(1.2,4.8)--(1.2,2.9)--(2.8,2.9)--(2.8,1)--(-1,1)--cycle;
      \node[red] at (0,5.2) {$W_1$ \small firing set};

      \draw[->,blue] (A1)--(B1);
      \draw[->,blue] (C1)--(B1);

      \node[circle,draw] (A2) at (6,4) {Alice};
      \node[circle,draw] (B2) at (8,4) {Bob};
      \node[circle,draw] (E2) at (6,2) {Elise};
      \node[circle,draw] (C2) at (8,2) {Charlie};
      \draw (A2)--(B2)--(C2)--(E2)--(A2);
      \draw (A2)--(C2);
      \draw (E2) to[bend left] (A2);
      \node at (5.3,4.5) {1};
      \node at (8.6,4.5) {--1};
      \node at (5.3,1.5) {--1};
      \node at (8.6,1.2) {3};
      \draw[red,thick,rounded corners] (5,4.8)--(7.2,4.8)--(7.2,2.9)--(8.8,2.9)--(8.8,1)--(5,1)--cycle;
      \node[red] at (6,5.2) {$W_2$ \small firing set};

      \draw[->,blue] (A2)--(B2);
      \draw[->,blue] (C2)--(B2);
      
      \node[circle,draw] (A3) at (0,0) {Alice};
      \node[circle,draw] (B3) at (2,0) {Bob};
      \node[circle,draw] (E3) at (0,-2) {Elise};
      \node[circle,draw] (C3) at (2,-2) {Charlie};
      \draw (A3)--(B3)--(C3)--(E3)--(A3);
      \draw (A3)--(C3);
      \draw (E3) to[bend left] (A3);
      \node at (-0.7,-0.5) {0};
      \node at (2.5,-0.5) {1};
      \node at (-0.7,-2.5) {--1};
      \node at (2.5,-2.8) {2};
      \draw[red,thick,rounded corners] (1.2,0.7)--(2.8,0.7)--(2.8,-3.1)--(1.2,-3.1)--cycle;
      \node[red] at (3,-3.4) {$W_3$ \small firing set};
      
      \draw[->,blue,thick] (B3)--(A3);
      \draw[->,blue,thick] (C3)--(A3);
      \draw[->,blue,thick] (C3)--(E3);
      
      \node[circle,draw] (A4) at (6,0) {Alice};
      \node[circle,draw] (B4) at (8,0) {Bob};
      \node[circle,draw] (E4) at (6,-2) {Elise};
      \node[circle,draw] (C4) at (8,-2) {Charlie};
      \draw (A4)--(B4)--(C4)--(E4)--(A4);
      \draw (A4)--(C4);
      \draw (E4) to[bend left] (A4);
      \node at (5.3,-0.5) {2};
      \node at (8.5,-0.5) {0};
      \node at (5.3,-2.5) {0};
      \node at (8.5,-2.8) {0};
      \node[blue, thick] at (9,-1) {\large WIN!!};

      \draw[->,thick] (3,3)--(5,3) node[midway,above] {$W_1$};
      \draw[->,thick] (5,1)--(3.0,0.5) node[midway,above] {$W_2$};
      \draw[->,thick] (3,-1)--(5,-1) node[midway,above] {$W_3$};

    \end{tikzpicture}
    \caption{Application of set-firing moves leading to a win in the case of the divisor mentioned in Figure \ref{fig:div-setup}}
    \label{fig:sf-walkthrough}
\end{figure}
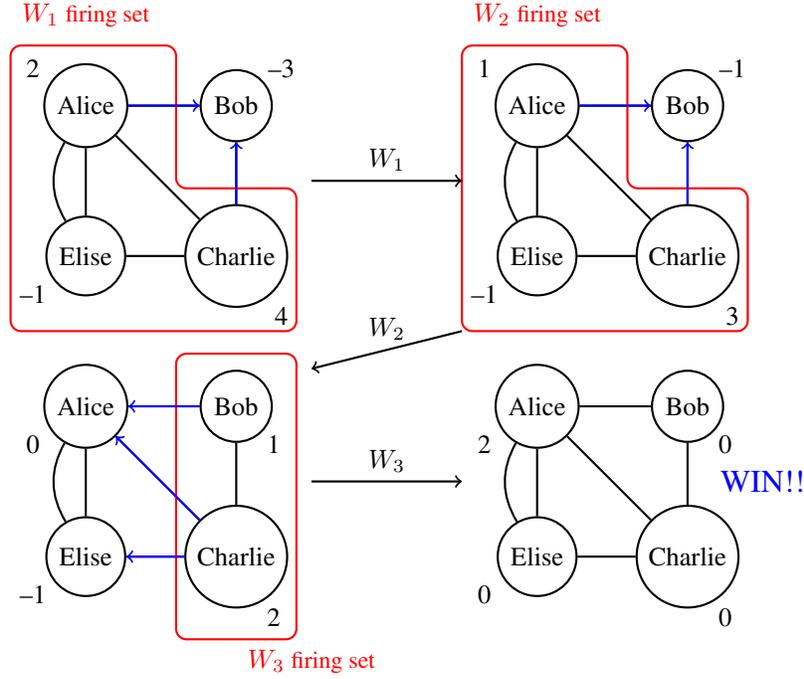

As shown in Figure \ref{fig:sf-walkthrough}, we can have a firing-set $W_1 = \{A, E, C\}$. After this firing move, all the internal lending and borrowing between the members of the firing set cancels out, and effective lending of $2$ happens to $B$ from $A$ \& $C$ lending $1$ each. Now, to get $B$ out of debt, we repeat the same set-firing move on this newly obtained divisor with $W_2 = W_1$. This gives us the divisor that can be represented as $0(A)+1(B)+2(C)-1(E)$. Finally, to get $E$ out of debt, we can carefully engineer our firing set to be $W_3=\{B, C\}$. This ensures we take the minimum number of lenders out of debt (vaguely speaking). After this move, as shown in the figure, all the graph members come out of debt, signifying that we have won the game!

The above steps can be performed via \texttt{chipfiring} as follows:
\begin{lstlisting}[language=Python]
# First firing: Alice, Elise, Charlie set fire
divisor.set_fire({"Alice", "Elise", "Charlie"})

# Second firing: Alice, Elise, Charlie set fire (again)
divisor.set_fire({"Alice", "Elise", "Charlie"})

# Third firing: Bob, Charlie set fire
divisor.set_fire({"Bob", "Charlie"})
\end{lstlisting}

\begin{proposition} (cf. \cite[Exercise 1.7]{corry2018divisors}) \label{pr:borrow-len-duality} 
Borrowing from a vertex \(v \in V\) is just like lending from all vertices in $V \setminus {v}$, and if we perform set-lending from all vertices in $V$, the net effect is zero. 
\end{proposition}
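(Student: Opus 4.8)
The plan is to reduce both assertions to a single bookkeeping identity about sums over the edges of $G$. The starting point is that a set-firing by $W$ is, by Definition \ref{def_setfiring}, the result of performing the lending moves at all vertices of $W$; since each lending move is a \emph{translation} $D \mapsto D - c_x$ by the fixed vector $c_x := \sum_{xw \in E}(x - w) \in \Div(G)$, which does not depend on $D$, these moves commute (this is the abelian property noted after Definition \ref{def-firing-move}) and composing them simply adds the translation vectors. Hence $D \overset{W}{\longrightarrow} D'$ exactly when
\begin{equation*}
D' = D - \sum_{x \in W} \sum_{xw \in E}(x - w),
\end{equation*}
where the inner sum runs over all edges incident to $x$, counted with multiplicity.

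Next I would evaluate the double sum $S_W := \sum_{x \in W}\sum_{xw\in E}(x-w)$ in the two cases of interest. For $W = V$, every edge instance $e = \{a,b\}$ of $E$ is met twice as $x$ ranges over $V$: once as a term $(a-b)$ (when $x = a$) and once as $(b-a)$ (when $x = b$), and these cancel. Thus $S_V = 0$, and set-firing from all of $V$ sends $D$ to $D - S_V = D$, which is the second claim. For $W = V \setminus \{v\}$, I would write $S_{V\setminus\{v\}} = S_V - \sum_{vw\in E}(v-w) = -\sum_{vw\in E}(v-w)$, so that the set-firing sends $D$ to $D + \sum_{vw \in E}(v-w)$; comparing with Definition \ref{def-firing-move}, this is precisely the divisor obtained from $D$ by a borrowing move at $v$, which is the first claim.

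The argument is essentially all bookkeeping, so there is no deep obstacle; the two points that need care are (i) justifying that a set-firing acts as the \emph{sum} of the individual lending vectors — for which the key observation is that each lending move is a $D$-independent translation, so commutativity upgrades to additivity — and (ii) correctly handling edge multiplicities in a multigraph, i.e.\ making sure every parallel edge between $a$ and $b$ contributes its own canceling pair $(a-b)+(b-a)$ to $S_V$. A slicker packaging of the same computation is to introduce the Laplacian $L$ of $G$, observe that lending at $x$ is $D \mapsto D - Le_x$ and set-firing $W$ is $D \mapsto D - L\mathbf{1}_W$; then $L\mathbf{1}_V = 0$ because the rows of $L$ sum to zero (second claim), and $L\mathbf{1}_{V\setminus\{v\}} = L\mathbf{1}_V - Le_v = -Le_v$ recovers the borrowing move at $v$ (first claim). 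I would likely present the elementary edge-pairing version in the text and mention the Laplacian reformulation as a remark, since the Laplacian is used elsewhere in the package.
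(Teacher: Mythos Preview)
Your argument is correct: the edge-pairing computation showing $S_V = 0$ and then $S_{V\setminus\{v\}} = -\sum_{vw\in E}(v-w)$ is exactly what is needed, and your care with multiplicities and with justifying additivity of the translation vectors is appropriate. The Laplacian reformulation via $L\mathbf{1}_V = 0$ is also valid and ties in nicely with the later sections of the paper.

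As for comparison with the paper: there is nothing to compare against. The paper states this proposition without proof, citing it as Exercise~1.7 in Corry--Perkinson and moving on immediately to the next definition. So your write-up would in fact \emph{supply} a proof where the paper gives none; either the elementary edge-pairing version or the one-line Laplacian version would be a welcome addition, and the latter has the virtue of foreshadowing the firing-script formalism introduced in Section~3.2.
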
 

\definition (cf. \cite[Definition 1.8]{corry2018divisors}) \label{def_lineq} A divisor $D$ is said to be \textbf{linearly equivalent} to another divisor $D'$, denoted $D \sim D'$, if we can obtain $D'$ from $D$ by a sequence of lending moves.

\definition (cf. \cite[Definition 1.11]{corry2018divisors}) \label{def_divclass} The \textbf{divisor class} determined by $D \in \Div(G)$ is:
\[
[D] = \{D' \in \Div(G) \colon D' \sim D\}.
\]

One can think of a divisor class as a self-contained economy where the total wealth does not change, but the distribution of wealth might shift around. In simpler terms, it represents all possible money distributions that can be achieved through lending.

\definition (cf.  \cite[Definition 1.13]{corry2018divisors}) \label{effective-div} A divisor $D$ is \textbf{effective} if $D(v) \geq 0$ for all $v \in V$, meaning no one is in debt. The set of effective divisors on $G$ is denoted by $\Div_+(G)$. \footnote{Since the set $\Div_+(G)$ does not have inverses, it is NOT a subgroup of $\Div(G)$, but rather a \textit{commutative monoid}.} We write this as $D\geq0$.

Thus, we can see that using the above definitions, the \textbf{objective} of the dollar game becomes: \textit{Is a given divisor linearly equivalent to an effective divisor?}

\definition (cf. \cite[Definition 1.14]{corry2018divisors}) \label{winnable-effective-div} A divisor $D$ is \textbf{winnable} if D is linearly equivalent to an effective divisor. Otherwise, it is \textit{unwinnable}.

Linear equivalence, effectiveness, and winnability can be checked in \texttt{chipfiring} as follows:

\begin{lstlisting}[language=Python]
from chipfiring import linear_equivalence, is_winnable

divisor1.is_effective()
linear_equivalence(divisor1, divisor2)
is_winnable(divisor1)
\end{lstlisting}

\definition (cf. \cite[Definition 1.22]{corry2018divisors}) \label{complete-linear-system} A \textit{complete linear system} of $D \in \Div(G)$ is:
\[
|D| = \{E \in \Div(G) \colon E \sim D, E \geq 0\}.
\]
Equivalently, a \textit{complete linear system} is the set of all effective divisors on graph $G$ that are linearly equivalent to $D$.

\subsection{Laplacian \& Firing Script}

In general, a Laplacian aims to measure the ``equitability'' or evenness of a diffusive process in pure sciences. Similar to the continuous case, we define a discrete Laplacian by drawing some parallels. Before we start defining the Laplacian, let us generalize our notion of \textit{set-firing} of V from the previous section into a \textbf{firing-script}, where we plan to compactly encode the essential information for the move in which some vertices in lending subset V lend/borrow multiple times, for instance.

\definition (cf. \cite[Definition 2.2]{corry2018divisors}) \label{def_firingscript} A \textit{firing script} is a function $\sigma\colon V \to \mathbb{Z}$, which denotes the number of times each vertex $v$ lends (fires) if $\sigma(v) > 0$. If $\sigma(v) < 0$, it denotes the number of borrowing moves. Moreover, if $\sigma(v) = 0$, the vertex $v$ does not participate in the move. \footnote{\textbf{Aside:} The collection of all firing scripts form an abelian group $\mathcal{M}(G)$ or $\mathbb{Z}^V$ \cite[Definition 2.2]{corry2018divisors}.}

Furthermore, a \textit{discrete Laplacian operator} is used to map a firing script to a divisor, and we define it as follows:

\definition (cf.  \cite[Definition 2.1]{corry2018divisors}) \label{def_discreteLaplacian} The \textit{discrete Laplacian operator} on G is the linear mapping $L\colon\mathbb{Z}^V \to \mathbb{Z}^V$ defined by 
\[
L(f)(v) := \sum_{vw \in E} (f(v) - f(w)),
\] where the space \(\mathbb{Z}^V := \{ f\colon V \to \mathbb{Z} \}\) contains $\mathbb{Z}$-valued functions on the vertices of G.

In the context of chip firing games, we can think of the discrete Laplacian as a tool that maps a firing script in $M(G)$ to a resulting divisor in $\Div(G)$. If $\sigma\colon V \to \mathbb{Z}$ is a firing script, then the resulting divisor after firing is given by:
\[
D' = D - \sum_{v \in V} \sigma(v) \left( \val(v)v - \sum_{wv \in E} w \right) = D - \sum_{v \in V} \sigma(v) \sum_{vw \in E} (v - w)
\]
\[
= D - \sum_{v \in V} \left( \val(v)\sigma(v) - \sum_{vw \in E} \sigma(w) \right)v = D - \sum_{v \in V} \sum_{vw \in E} (\sigma(v) - \sigma(w))v
\]

Thus, any divisor can \& should be reached by a firing script from any other divisor in the linearly equivalent set.

\definition (cf. \cite[Definition 2.3, Exercise 2.4]{corry2018divisors}) \label{def_script-firing} The \textit{script-firing with firing script} $\sigma$ is denoted by $D \xrightarrow{\sigma} D'$, and because degree is preserved under firing moves, we also have $\deg(L(\sigma)) = 0$.

\definition (cf.  \cite[p. 20]{corry2018divisors}) The \textbf{principal divisor} associated to a firing script $\sigma$ is
\[
\text{div}(\sigma) := \sum_{v \in V} \left( \val(v) \cdot \sigma(v) - \sum_{vw \in E} \sigma(w) \right) v.
\] 

It is also worth noting that the set of principal divisors form a subgroup $\Prin(G) < \Div^0(G)$. This also means that linear equivalence class of D is coset of $\Prin(G)$: $[D] = D+ \Prin(G)$. From this, we also define the Picard Group as $\Pic(G) = \Div(G)/\Prin(G)$ and the Jacobian Group as $\Jac(G) = \Div^0(G)/\Prin(G)$. The Jacobian group is also called the \emph{critical group.}  \cite{baker2007riemann, biggs1997chip}

The matrix representation of the Laplacian operator will is the Laplacian matrix, which is can also be written as follows.

\definition (cf. \cite[Definition 2.6]{corry2018divisors}) The \textit{Laplacian matrix}, also denoted by $L$, is the $|V| \times |V|$ integer matrix with

\[
L_{ij} = L(\chi_j)(v_i) = 
\begin{cases} 
\val(v_i) & \text{if } i = j \\
-\text{(\# of edges between } v_j \text{ and } v_i\text{)} & \text{if } i \neq j.
\end{cases}
\]
Here \(\chi_j(v_i) = 
\begin{cases} 
1 & i = j \\
0 & i \neq j.
\end{cases}\) is the firing script of $v_j$ making a single lending move (to $v_i$).

It is also evident that $L = \text{Deg}(G) - A^T$, where $\text{Deg}(G)$ is a diagonal matrix with vertex-degrees of $G$, and $A$ is the adjacency matrix.
All lending moves are encoded in $L$ because the lending move by $v_j$ corresponds to subtracting the $j$th column of $L$ from a divisor at hand. For instance, given a firing-script column vector $\Vec{\sigma}$, we can say $D' = D - L\Vec{\sigma}$.

As an illustration, going back to the example we have discussed so far in Figure \ref{fig:sf-walkthrough}, we can see that the Laplacian matrix assumes the following form:

\(
L = 
\begin{bmatrix}
4 & -1 & -1 & -2 \\
-1 & 2 & -1 & 0 \\
-1 & -1 & 3 & -1 \\
-2 & 0 & -1 & 3 \\
\end{bmatrix} 
\), which is filled in with headers as
\(
\begin{array}{c|cccc}
& V_{Alice} & V_{Bob} & V_{Charlie} & V_{Elise} \\
\hline
V_{Alice} & 4 & -1 & -1 & -2 \\
V_{Bob} & -1 & 2 & -1 & 0 \\
V_{Charlie} & -1 & -1 & 3 & -1 \\
V_{Elise} & -2 & 0 & -1 & 3 \\
\end{array}
\)

Moreover, from Figure \ref{fig:sf-walkthrough}, we can see that to win (reach an effective divisor), Bob borrowed twice, and then Bob and Charlie both can be considered to lend once. So, we can represent this in the form of a firing script (ordered column vector) as: \(
 \Vec{\sigma} = 
\begin{bmatrix}
0 \\
-1 \\
1 \\
0 \\
\end{bmatrix} 
\). Thus, 

\(
D' = \begin{bmatrix}
2 \\
-3 \\
4 \\
-1 \\
\end{bmatrix} - 
\begin{bmatrix}
4 & -1 & -1 & -2 \\
-1 & 2 & -1 & 0 \\
-1 & -1 & 3 & -1 \\
-2 & 0 & -1 & 3 \\
\end{bmatrix}
\begin{bmatrix}
0 \\
-1 \\
1 \\
0 \\
\end{bmatrix} =
\begin{bmatrix}
2 \\
-3 \\
4 \\
-1 \\
\end{bmatrix} - 
\begin{bmatrix}
0 \\
-3 \\
4 \\
-1 \\
\end{bmatrix} =
\begin{bmatrix}
2 \\
0 \\
0 \\
0 \\
\end{bmatrix}
\)

We can see that the final result $D'$ matches the effective divisor we obtained in Figure \ref{fig:sf-walkthrough} right when we hit the win condition. The above example can be performed in \texttt{chipfiring} as follows:

\begin{lstlisting}[language=Python]
from chipfiring import CFGraph, CFLaplacian, CFiringScript
import numpy as np

# Create Laplacian for a CFGraph
laplacian = CFLaplacian(graph)

# Create a firing script (net firings at each vertex)
script = {"Charlie": 1, "Bob": -1}  # Charlie lends 1, Bob borrows 1
firing_script = CFiringScript(graph, script)

# Apply the Laplacian: D' = D - L * firing_script
result_divisor = laplacian.apply(initial_divisor, firing_script)
print(f"Result: {result_divisor.degrees_to_str()}")
\end{lstlisting}

\section{Algorithms for Winnability}
\label{ch:algorithms}
Having defined our setup and objective (winnability), we now consider some algorithms for winnability determination.

\subsection{Greedy Algorithm}

One way to play the dollar game is for each in-debt vertex to attempt to borrow its way out of debt. The problem is that borrowing once from each vertex is the same as not borrowing at all. Solving this gives an algorithm for the dollar game, which is essentially repeatedly choosing an in-debt vertex and making a borrowing move at that vertex until either the game is won or it becomes impossible to go on without reaching a state in which all of the vertices have made borrowing moves. Below, we first present an adaptation of the algorithm's pseudocode from \cite[\S 3.1]{corry2018divisors}.

\textbf{Note on Uniqueness of the greedy algorithm script:} The greedy algorithm \footnote{Proof of the validity of the greedy algorithm, and the uniqueness of generated firing script can be found in \cite[\S 3.1]{corry2018divisors}.} can be modified to produce a firing script if its input is winnable. Initialize by setting \( \sigma = 0 \), and then each time step 6 of the algorithm below is invoked, replace \( \sigma \) by \( \sigma - v \). It turns out that the resulting script is independent of the order in which vertices are added.

\begin{algorithm}
\label{greedy-algo}
\caption{Greedy algorithm for the dollar game. (Adapted from \cite[Algorithm 1]{corry2018divisors})}
\begin{algorithmic}[1]
\REQUIRE $D \in \Div(G)$.
\ENSURE TRUE if $D$ is winnable; FALSE if not.
\STATE \textbf{initialization:} $M = \emptyset \subseteq V$, the set of marked vertices.
\WHILE{$D$ not effective}
    \IF{$M \neq V$}
        \STATE choose any vertex in debt: $v \in V$ such that $D(v) < 0$
        \STATE modify $D$ by performing a borrowing move at $v$
        \IF{$v$ is not in $M$}
            \STATE add $v$ to $M$
        \ENDIF
    \ELSE 
        \STATE /* required to borrow from all vertices */
        \RETURN FALSE \hfill /* unwinnable */
    \ENDIF
\ENDWHILE
\RETURN TRUE \hfill /* winnable */
\end{algorithmic}
\end{algorithm}

Let's now introduce the greedy algorithm Pythonic API from our \texttt{chipfiring} package.

\begin{lstlisting}[language=Python]
from chipfiring import CFGraph, CFDivisor, GreedyAlgorithm

# Initialize Greedy Algorithm with a CFGraph and a CFDivisor
algorithm = GreedyAlgorithm(graph, divisor)

# Run the algorithm to determine winnability
# The winnability determination and a CFiringScript object are returned
winnable, firing_script = algorithm.play()
\end{lstlisting}

\subsection{``Benevolence'' Algorithm}
\label{sec:benevolence-algo}

Rather than implementing greed, one might try its opposite: a ``benevolent'' vertex $q$ might choose to concentrate all the games' debt upon itself. One particular form of benevolence that is of significant theoretical and algorithmic importance is encapsulated in the following notion. Informally, one can view this notion as follows: the vertex $q$ volunteers to be the only vertex in debt, on the condition that the remaining vertices move as much money as possible in $q$'s direction, without going into debt.

\definition (cf. \cite[Definition 3.4]{corry2018divisors}) \label{def_qreduced} Let $q \in V$. A divisor $D \in \Div(G)$ is called \textbf{$q$-reduced} if both
    \begin{enumerate}
        \item $D(v) \geq 0$ for all $v \in V \setminus \{q\}$, and
        \item For every nonempty subset $S \subseteq V \setminus \{q\}$, there exists a vertex $v \in S$ such that $D(v) < \outdeg_S(v)$, where $\outdeg_S(v)$ denotes the number of edges $vw$ such that $w \notin S$.
    \end{enumerate}
    
   It is convenient to have a shorthand for this second condition, which is provided by the next definition.
   
   \definition (cf. \cite[Definition 3.3]{corry2018divisors}) \label{legal-set-firing} Let \( D \in \Div(G) \), and let \( S \subseteq V \). Suppose \( D' \) is obtained from \( D \) by firing each of the vertices in \( S \) once. Then \( D \xrightarrow{S} D' \) is a \textbf{legal set-firing} if \( D'(v) \geq 0 \) for all \( v \in S \), i.e., after firing \( S \), none of the vertices in \( S \) are in debt. In this case, we say it is \textbf{legal to fire \( S \)}. [Note: if it is legal to fire \( S \), then the vertices in \( S \) must also be out of debt \textit{before} firing.]
   
  Now observe that the second condition of Definition \ref{def_qreduced} may be rephrased as: it is not legal to fire any nonempty set $S$ that does not contain $q$. It is in this sense that the chips of $D$ cannot be pulled any closer to $q$. The existence of such a divisor is neatly established by a minimization argument, using the following order.

\definition (cf. \cite[Exercise 3.7]{corry2018divisors}) \label{def_prec} 
Given divisors $D, D' \in \operatorname{Div}(G)$ and a spanning tree $(T, q)$ of $G$ rooted at a vertex $q$, let $v_1 = q, v_2, \dotsc, v_n$ be a tree ordering of the vertices, where:
\begin{itemize}
    \item $T$ is a connected, cycle-free subgraph of $G$ that includes all vertices and contains exactly $n-1$ edges (i.e., a spanning tree),
    \item the ordering respects the structure of $T$, meaning that if $v_i$ lies on the unique path from $q$ to $v_j$ in $T$, then $i < j$.
\end{itemize}
We say that $D' \prec D$ if either:
\begin{enumerate}
    \item $\deg(D') < \deg(D)$, or
    \item $\deg(D') = \deg(D)$ and there exists an index $i$ such that $D'(v_i) > D(v_i)$, and for all $j < i$, $D'(v_j) = D(v_j)$. Equivalently, $D \neq D'$ and, for the minimum index $i$ such that $D'(v_i) \neq D(v_i)$, we have $D'(v_i) > D(v_i)$.
\end{enumerate}

In other words, $\prec$ is the reverse lexicographic order on divisors of the same degree, where the coefficients are written according to the chosen tree ordering. The utility of $\prec$ is that it encapsulates the idea that $D' \prec D$ if the chips of $D'$ have been ``pulled closer to'' $q$, in a sense.

\begin{proposition} (cf. \cite[Exercise 3.7]{corry2018divisors})
\label{prop:qreduced-minimum}
Fix a divisor $D$, and let $\mathcal{D}$ denote the set of all divisors $D' \sim D$ such that $D'(v) \geq 0$ for all $v \neq q$. Suppose that $D'$ is minimal in $\mathcal{D}$ under the order $\prec$. Then $D'$ is $q$-reduced.
\end{proposition}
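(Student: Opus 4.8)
The plan is to verify the two defining conditions of a $q$-reduced divisor (Definition \ref{def_qreduced}) directly for a $\prec$-minimal $D' \in \mathcal{D}$. The first condition, $D'(v) \ge 0$ for all $v \ne q$, is exactly membership in $\mathcal{D}$, so it comes for free. (As a side remark one checks that $\mathcal{D}$ is nonempty and that $\prec$ genuinely has minimal elements on it: every $D' \in \mathcal{D}$ has $\deg D' = \deg D$, and along a strictly $\prec$-decreasing chain the coordinate $D'(q)$ is nondecreasing and bounded above by $\deg D$, then $D'(v_2)$ is nondecreasing and bounded above once $D'(q)$ is fixed, and so on, so no infinite descent is possible; the statement, however, only asks us to treat a given minimal $D'$, so I will not belabor this.)

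For the second condition I would argue by contradiction, using the reformulation noted just before the proposition: if condition 2 fails for $D'$, then there is a nonempty set $S \subseteq V \setminus \{q\}$ that is legal to fire, i.e. $D'(v) \ge \outdeg_S(v)$ for every $v \in S$. Let $D''$ be obtained by the set-firing $D' \xrightarrow{S} D''$. Then $D'' \sim D'$, hence $D'' \sim D$, and moreover $D'' \in \mathcal{D}$: for $v \in S$ we have $D''(v) = D'(v) - \outdeg_S(v) \ge 0$ by the legality hypothesis, and for $v \notin S$ with $v \ne q$ we have $D''(v) = D'(v) + (\#\text{ edges from } v \text{ to } S) \ge D'(v) \ge 0$. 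So $D''$ competes with $D'$ for minimality, and it suffices to show $D'' \prec D'$.

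Since set-firing preserves degree, $\deg D'' = \deg D'$, so I must show that at the first index $i$ in the fixed tree ordering $v_1 = q, v_2, \dots, v_n$ (rooted at $q$) for which $D''(v_i) \ne D'(v_i)$, one has $D''(v_i) > D'(v_i)$. Firing $S$ changes the divisor as follows: each $v \in S$ loses $\outdeg_S(v) \ge 0$ chips, each $v \notin S$ adjacent to $S$ gains a positive number of chips, and every other vertex is unchanged; also, connectedness together with $\emptyset \ne S \ne V$ forces the ``boundary'' of $S$ to be nonempty, so some vertex does change. The crux is the claim that the \emph{first} vertex to change is a boundary vertex — hence one that strictly increases — and never a vertex of $S$ (which could only decrease or stay put). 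To see this, suppose the earliest changed vertex $v_i$ lay in $S$; walk up the tree $T$ from $v_i$ toward $q$. The parent of $v_i$ in $T$ either lies outside $S$, in which case it is a boundary vertex, has changed, and precedes $v_i$ — contradiction; or it lies in $S$, in which case, having not changed, it must have $\outdeg_S = 0$, which forces all of its neighbors, including its own parent in $T$, into $S$. Iterating, the entire path from $v_i$ up to $q$ lies in $S$, contradicting $q \notin S$. Hence $v_i$ is a boundary vertex, $D''(v_i) > D'(v_i)$, so $D'' \prec D'$, contradicting minimality of $D'$. Therefore condition 2 holds and $D'$ is $q$-reduced.

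I expect the tree-climbing argument in the last paragraph to be the main obstacle: it is the only place where the specific shape of $\prec$ — reverse-lexicographic order with respect to a tree ordering rooted at $q$ — is actually used, and it is precisely what rules out the ``bad'' scenario in which firing $S$ decreases an early coordinate and thereby moves $D'$ up rather than down. Everything else (degree preservation, the two membership checks for $D''$, and the legality rephrasing) is routine bookkeeping from the definitions.
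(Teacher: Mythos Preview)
Your argument is correct and follows the same approach as the paper's: assume $D'$ admits a legal firing set $S \subseteq V \setminus \{q\}$, fire it to obtain $D'' \in \mathcal{D}$, and derive $D'' \prec D'$, contradicting minimality. The paper's version of the last step is a touch more direct than your tree-climbing iteration: it simply takes the \emph{minimum} vertex $v_i$ of $S$ in the tree order, observes that its tree-parent $v_j$ lies outside $S$ and therefore gains a chip, and notes that none of $v_1,\dots,v_j$ is in $S$ so none loses chips---which immediately gives $D'' \prec D'$.
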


\begin{proof}
Suppose to the contrary that $D' \in \mathcal{D}$ is not $q$-reduced. Then there is some legal set-firing $D' \overset{S}{\rightarrow} D''$ where $\emptyset \neq S \subseteq V \backslash{q}$. Let $v_i \in S$ be the minimum element of $S$ under the chosen tree ordering, and let $v_j$ be the parent of $v_i$ in the spanning tree. Then firing $S$ sends at least one chip from $v_i$ to $v_j$. Furthermore, none of the vertices $v_2, \cdots, v_j$ is in $S$, so none of these vertices lose chips when firing $S$. Since at least one of them gains a chip, this implies that $D'' \prec D'$, so $D'$ is \emph{not} minimal.
\end{proof}

In fact, the converse of Proposition \ref{prop:qreduced-minimum} is also true, though we omit the proof; see \cite[Theorem 3.6]{corry2018divisors}. That is, the $q$-reduced divisors $D_q \sim D$ is \emph{unique}. It follows from definitions that if \emph{any} $D' \sim D$ is effective, then $D_q$ must be. Therefore:

\begin{proposition} (cf. \cite[Corollary 3.7]{corry2018divisors})
\label{prop:lin-eq-q-red}
Let \( D \in \Div(G) \), and let \( D' \) be the \( q \)-reduced divisor linearly equivalent to \( D \). Then \( |D| \neq \emptyset \) if and only if \( D' \geq 0 \). In other words, \( D \) is winnable if and only if \( D'(q) \geq 0 \).
\end{proposition}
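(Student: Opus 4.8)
The plan is to prove the two implications of the ``if and only if'' separately; the reverse direction is immediate, and essentially all the content sits in the forward direction, where I would lean on the minimization characterization of Proposition~\ref{prop:qreduced-minimum} together with the (cited) uniqueness of the $q$-reduced divisor.

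I would begin by recording why the two phrasings are equivalent: since $D'$ is $q$-reduced, condition~(1) of Definition~\ref{def_qreduced} gives $D'(v) \ge 0$ for all $v \ne q$, so $D' \ge 0$ holds exactly when $D'(q) \ge 0$. Moreover $D' \sim D$, so once $D' \ge 0$ we get $D' \in |D|$ straight from the definition of the complete linear system, hence $|D| \ne \emptyset$. That settles ``$D' \ge 0 \Longrightarrow |D| \ne \emptyset$'' with nothing further to do.

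For the converse, suppose $|D| \ne \emptyset$ and fix $E \in |D|$, so $E \sim D$ and $E \ge 0$. Then $E(v) \ge 0$ for every $v \ne q$, i.e.\ $E$ lies in the family $\mathcal D$ of divisors linearly equivalent to $D$ that are nonnegative off $q$, as in Proposition~\ref{prop:qreduced-minimum}. In particular $\mathcal D \ne \emptyset$, and since all its members share the degree $\deg D$ while their coordinates away from $q$ are bounded below by $0$, the reverse-lexicographic order $\prec$ attains a minimum on $\mathcal D$, say at $D^*$. By Proposition~\ref{prop:qreduced-minimum}, $D^*$ is $q$-reduced, and by the uniqueness of the $q$-reduced divisor in the class of $D$ (the cited converse, \cite[Theorem~3.6]{corry2018divisors}), $D^* = D'$; hence $D' \preceq E$. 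Now I would compare the two divisors at the root. They have equal degree, so either $D' = E$, whence $D'(q) = E(q) \ge 0$, or there is a least tree-index $i$ with $D'(v_i) > E(v_i)$ and $D'(v_j) = E(v_j)$ for $j < i$. Because $v_1 = q$ heads every tree ordering, the second case forces $D'(q) \ge 0$ as well: if $i = 1$ then $D'(q) > E(q) \ge 0$, and if $i > 1$ then $D'(q) = E(q) \ge 0$. In all cases $D'(q) \ge 0$, so $D' \ge 0$ and $D' \in |D|$, which completes the argument.

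The only genuinely nontrivial step is the identification $D^* = D'$, which rests on the uniqueness of the $q$-reduced divisor in a linear equivalence class — a fact the excerpt quotes from \cite[Theorem~3.6]{corry2018divisors} but does not prove here; a fully self-contained treatment would have to establish that uniqueness first. Beyond that, the proof is bookkeeping with $\prec$, and the one idea that makes it go through is that $q$ is the very first vertex of any tree ordering, so a reverse-lexicographic comparison of $D'$ against any effective competitor in its class is decided in $q$'s favour (or tied) already at the leading coordinate.
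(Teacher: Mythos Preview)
Your proof is correct and follows essentially the same route as the paper. The paper does not give a full proof of this proposition; it merely asserts, after recording the converse of Proposition~\ref{prop:qreduced-minimum} and the resulting uniqueness of $D_q$, that ``it follows from definitions that if \emph{any} $D' \sim D$ is effective, then $D_q$ must be.'' Your argument is exactly the unpacking of that sentence: identify $D'$ with the $\prec$-minimum of $\mathcal D$, then read off $D'(q) \geq E(q) \geq 0$ from the fact that $q = v_1$ is the leading coordinate in the tree ordering.
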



From this, we obtain a version of ``benevolence'' guaranteed to produce an effective $E \sim D$ if it exists. This can be done by following the steps, which are adapted from \cite[\S 3.2]{corry2018divisors}.

\begin{enumerate}
    \item Pick some ``benevolent vertex'' \( q \in V \). Call \( q \) the source vertex, and let \( V \setminus \{q\} \) be the set of non-source vertices.
    \item Let \( q \) lend/fire so many chips that the non-source vertices, sharing among themselves, are out of debt. This is done to concentrate the debt at the source vertex.
    \item At this stage, only \( q \) is in debt, and it makes no further lending or borrowing moves. It is now the job of the non-source vertices to try to relieve \( q \) of its debt. Look for \( S \subseteq V \setminus \{q\} \) with the legal set-firing property as stated in definition \ref{legal-set-firing}. Having found such an \( S \), make the corresponding set-lending move. 
    \item Repeat until no such \( S \) remains. The resulting divisor is said to be \( q \)-reduced. More importantly, if, in the end, $q$ is no longer in debt, $D$ is winnable. Otherwise, $|D| = \emptyset$, or equivalently $D$ is unwinnable.
\end{enumerate}

In the \texttt{chipfiring} package, this procedure can be performed with the following syntax.

\begin{lstlisting}[language=Python]
from chipfiring import q_reduction, is_q_reduced

# Given any divisor, return a q-reduced divisor
q_red_div = q_reduction(divisor)

# Check if any given divisor is q_reduced
is_q_reduced(divisor)
\end{lstlisting}

As mentioned in \cite{corry2018divisors}, some naturally interesting questions about this strategy are: Is it always possible to complete step 2? Is step 3 guaranteed to terminate? If the strategy does not win, does this mean the game is unwinnable? (After all, the moves in step 3 are constrained.) Is the resulting \( q \)-reduced divisor unique? Can the strategy be efficiently implemented?

The following sections gradually show that the answer to all of these questions is ``yes,'' and explain the implementation of $\texttt{q\_reduction}.$

\subsection{Configurations \& related preliminaries}

In many computations, it is useful to choose a special vertex $q$ and ignore any chips at $q$. Following \cite{corry2018divisors}, we use the word \emph{configuration} for a divisor without a specified degree at $q$.

\definition (cf. \cite[\S 2.2]{corry2018divisors}) \label{def:configuration} Fix a vertex $q \in V$ and define $\widetilde{V} := V \setminus \{q\}$. Then a \textit{configuration} $c$ is an element of the subgroup
\[
\text{Config}(G, q) = \mathbb{Z}\widetilde{V} \subseteq \mathbb{Z}V = \Div(G).
\]

We also write $c' \geq c$ for $c,c' \in \text{Config}(G)$ if $c(v) \geq c'(v)$ for all $v \in \widetilde{V}$. A configuration $c$ is said to be \textit{non-negative} ($c \geq 0$) is $c(v) \geq 0$ for all $v \in \widetilde{V}$. We define lending \& borrowing operations on configurations the same way as with divisors, but in the case of configurations, we do not keep track of the number of chips present at $q$.

\definition (cf. \cite[\S 2.2]{corry2018divisors}) The \textit{degree} of a configuration \( c \) is calculated as $\deg(c) = \sum_{v \in \widetilde{V}} c(v)$.

\definition (cf. \cite[\S 2.2]{corry2018divisors}) Configurations \( c \) and \( c' \) are said to be \textit{linearly equivalent}, written as \( c \sim c' \) if they can be transformed into one another through a sequence of lending and borrowing operations. 

\textbf{Note:} Unlike linearly equivalent divisors, linearly equivalent configurations need not have the same degree. In effect, the difference of degrees tells how many chips were fired to or from $q$. More precisely, $c \sim c'$ as configurations if and only if $c - \deg(c) q \sim c' - \deg(c') q$ as divisors.

For instance, let us consider a slight relabeling of the example in Figure \ref{fig:div-setup}, where Bob is labeled as $q$, and thus we consider configurations with respect to Bob. We can see configurations $c \sim c'$ as depicted in Figure \ref{fig:config-equivalence}.

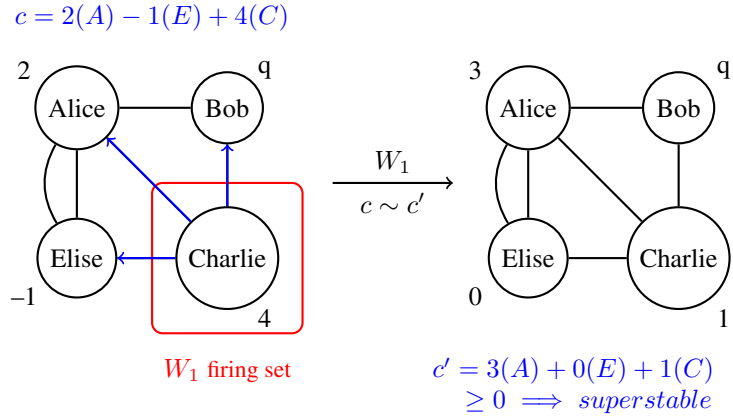
\begin{figure}[h]
    \centering
    \begin{tikzpicture}[scale=1, transform shape, thick]
      \node[circle,draw] (A1) at (0,4) {Alice};
      \node[circle,draw] (B1) at (2,4) {Bob};
      \node[circle,draw] (E1) at (0,2) {Elise};
      \node[circle,draw] (C1) at (2,2) {Charlie};
      \draw (A1)--(B1)--(C1)--(E1)--(A1);
      \draw (A1)--(C1);
      \draw (E1) to[bend left] (A1);
      \node at (-0.7,4.5) {2};
      \node at (2.5,4.5) {q};
      \node at (-0.7,1.5) {--1};
      \node at (2.5,1.2) {4};
      \draw[red,thick,rounded corners] (1,3)--(3,3)--(3,1)--(1,1)--cycle;
      \node[red] at (2,0.5) {$W_1$ \small firing set};

      \node[blue] at (1,5.2) {$c=2(A)-1(E)+4(C)$};

      \draw[->,blue] (C1)--(A1);
      \draw[->,blue] (C1)--(B1);
      \draw[->,blue] (C1)--(E1);

      \node[circle,draw] (A2) at (6,4) {Alice};
      \node[circle,draw] (B2) at (8,4) {Bob};
      \node[circle,draw] (E2) at (6,2) {Elise};
      \node[circle,draw] (C2) at (8,2) {Charlie};
      \draw (A2)--(B2)--(C2)--(E2)--(A2);
      \draw (A2)--(C2);
      \draw (E2) to[bend left] (A2);
      \node at (5.3,4.5) {3};
      \node at (8.6,4.5) {q};
      \node at (5.3,1.5) {0};
      \node at (8.6,1.2) {1};

      \node[blue] at (6.6,0.5) {$c'=3(A)+0(E)+1(C)$};
      \node[blue] at (6.8,0.1) {$\geq 0\implies superstable$} ;
      
      \draw[->,thick] (3.4,3)--(5,3) node[midway,above] {$W_1$} node[midway, below] {$c \sim c'$};

    \end{tikzpicture}
    \caption{Application of firing move by Charlie, on configurations with respect to $q = \mathrm{Bob}$. This is the same graph as Figure \ref{fig:div-setup}.}
    \label{fig:config-equivalence}
\end{figure}

The notion of a $q$-reduced divisor corresponds to the notion of \emph{superstable} configuration, as defined below.

\definition (cf. \cite[Definition 3.11]{corry2018divisors}) Let \( c \in \text{Config}(G) \), and let \( S \subseteq \widetilde{V} \). Suppose \( c' \) is the configuration obtained from \( c \) by firing the vertices in \( S \). Then \( c \xrightarrow{S} c' \) is a \textbf{legal set-firing} if \( c'(v) \geq 0 \) for all \( v \in S \).

\definition (cf. \cite[Definition 3.12]{corry2018divisors}) \label{def:superstable} The configuration \( c \in \text{Config}(G) \) is \textbf{superstable} if \( c \geq 0 \) and has no legal nonempty set-firings. Equivalently, for all nonempty \( S \subseteq \widetilde{V} \), there exists \( v \in S \) such that \( c(v) < \text{outdeg}_S(v) \).

Now, let's introduce our \texttt{chipfiring} package API corresponding to these notions.

\begin{lstlisting}[language=Python]
from chipfiring import CFConfig

# Given any divisor, and a vertex q, initialize a configuration
conf_wrt_Bob = CFConfig(divisor, q_name="Bob")

# Retrieve degrees at non-q vertices of the configuration
degree_of_conf_wrt_Bob = conf_wrt_Bob.get_degree_sum()

# Calculate outdegree of the configuration
# with respect to vertex v and set S
conf_wrt_Bob.get_out_degree_S(
    v_name_in_S = "Alice",
    S_names = {"Alice", "Charlie"}
)

# Check if a given set S is legal set-firing
conf_wrt_Bob.is_legal_set_firing(
    S_names = {"Alice", "Charlie"}
)

# Check superstability of configuration
conf_wrt_Bob.is_superstable()

# Retrieving underlying properties of configuration
print(f"Degree at q: {config.get_q_underlying_degree()}")
print(f"Configuration on V~: {config.get_v_tilde_names()}")
\end{lstlisting}

\subsection{Dhar's Algorithm}
\label{ch:dhar-algo} 

Upon rebranding $q$-reducedness as superstability, we can rephrase the task set before us: given a configuration $c$, we wish to repeatedly find legal sets to fire until we arrive at a superstable configuration. Naturally, we wish to find such sets without having to search through the entire parameter space of $2^{|\widetilde{V}|} - 1$ plausible subsets. 
A beautiful answer to this need is provided by \emph{Dhar's burning algorithm}. This algorithm was introduced in \cite{dhar1990self}; we follow the exposition of it in \cite[\S 3.4.1]{corry2018divisors}.

Let $c \in \text{Config}(G, q)$, and assume $c \geq 0$  (if we begin with a non-effective configuration, we should first lend generously from $q$ and spread the wealth). Now, to find a legal set-firing for $c$, if it exists, imagine the edges of our graph are made of wood so that when vertex \( q \) is ignited, the fire spreads along its incident edges. Furthermore, think of the configuration \( c \) as \( c(v) \) firefighters present at each \( v \in \widetilde{V} \), given that each firefighter can only control the fire coming from a single edge. This tells us that a vertex is protected only if the number of burning incident edges is $\leq c(v)$. Otherwise, the firefighters fail, and the vertex is set on fire.\footnote{Fun Note (cf. \cite[p. 46]{corry2018divisors}): No need to worry; firefighters are rescued by an underground tunnel built by Amherst College.} In the end, we conclude that the unburnt vertices constitute a set that may be legally fired from \( c \) and that if this set is empty, then by Definition \ref{def:superstable}, \( c \) is superstable. We adapt below the pseudocode version of the algorithm from \cite[\S 3.4.1]{corry2018divisors}.

\begin{algorithm}
\caption{Dhar's algorithm.}
\begin{algorithmic}[1]
\REQUIRE a nonnegative configuration \( c \)
\ENSURE a legal firing set \( S \subseteq \widetilde{V} \), empty iff \( c \) is superstable
\STATE \textbf{initialization:} \( S = \widetilde{V} \)
\WHILE{ \( S \neq \emptyset \) }
    \IF{ \( c(v) < \text{outdeg}_S(v) \) for some \( v \in S \)}
        \STATE \( S = S \setminus \{v\} \)
    \ELSE
        \RETURN \( S \) \hfill /* \( c \) is not superstable */
    \ENDIF
\ENDWHILE
\RETURN \( S \)
\end{algorithmic}
\end{algorithm}

Let us run through Dhar's algorithm described above in Figure \ref{fig:dhar-example} while continuing from the starting linearly equivalent configuration ($c'$), which we obtained in Figure \ref{fig:config-equivalence}. After the vertex $q$ is set on fire, it sets two edges on fire (one between Alice and Bob and one between Bob and Charlie). However, since there are $3 \geq 1$ firemen at Alice's vertex, and there are $1 \geq 1$ firefighters at Charlie's vertex, the algorithm terminates and outputs the legal set firing as the set $S = \{\mathrm{Alice},  \mathrm{Elise}, \mathrm{Charlie}\}$ as expected.

\begin{figure}[h]
    \centering
    \begin{tikzpicture}[scale=1, transform shape, thick]
      \node[circle,draw] (A1) at (0,4) {Alice};
      \node[circle,draw,red] (B1) at (2,4) {Bob};
      \node[circle,draw] (E1) at (0,2) {Elise};
      \node[circle,draw] (C1) at (2,2) {Charlie};
      \draw (A1)--(B1)--(C1)--(E1)--(A1);
      \draw (A1)--(C1);
      \draw (E1) to[bend left] (A1);
      \node at (-1,4.5) {3};
      \node[blue] at (-1,4) {($>$1)};
      \node[blue] at (-1.4,3.6) {Protected!};
      \node at (2.5,4.5) {q};
      \node at (-0.7,1.5) {0};
      \node at (2.35,1.1) {1};
      \node[blue] at (2.9,1.1) {($\geq$ 1)};
      \node[blue] at (2.4,0.7) {Protected!};
      \draw[red,thick,rounded corners] (-2.3,4.8)--(1.15,4.8)--(1.15,3.1)--(3.4,3.1)--(3.4,0.3)--(-2.3,0.3)--cycle;
      \node[red] at (0,5.2) {Legal firing set S};

      \node[blue] at (6,3.2) {Algorithm Terminates here \&};
      \node[blue] at (6,2.8) {outputs the Legal firing set S};
      \node[blue] at (6,1.2) {$c=3(A)+0(E)+1(C) \geq 0$};

      \draw[-,red,thick] (A1)--(B1);
      \draw[-,red] (C1)--(B1);

    \end{tikzpicture}
    \caption{Application of Dhar's Algorithm on the same graph as Figure \ref{fig:config-equivalence}}
    \label{fig:dhar-example}
\end{figure}
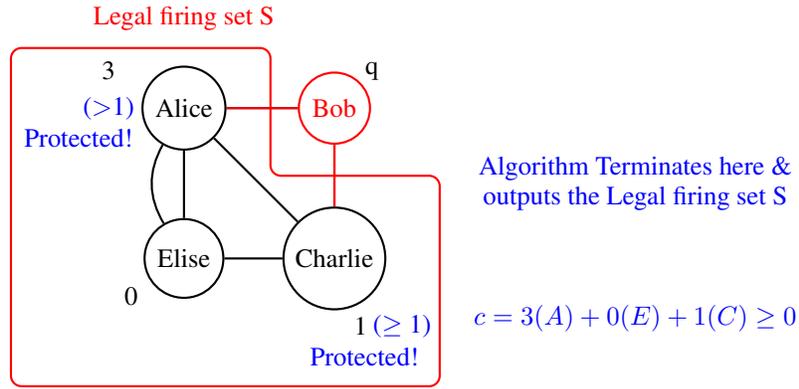

Since we have Dhar's algorithm as a tool, let us revisit the procedure of finding $q$-reduced divisors (the ``benevolence'' algorithm) as described in Section \ref{sec:benevolence-algo}. Below, we present an adaptation of the pseudocode version of the updated algorithm described in \cite[\S3.4.2]{corry2018divisors}. This algorithm has some additional peculiar optimizations, which can help decrease the runtime.

\begin{algorithm}
\label{lin-eq-q-red}
\caption{Find the linearly equivalent \( q \)-reduced divisor.}
\begin{algorithmic}[1]
\REQUIRE \( D \in \Div(G) \) and \( q \in V \)
\ENSURE the unique \( q \)-reduced divisor linearly equivalent to \( D \)
\STATE Use a greedy algorithm to bring each vertex \( v \neq q \) out of debt, so that we may assume \( D(v) \geq 0 \) for all \( v \neq q \).
\STATE Repeatedly apply Dhar's algorithm until \( D \) is \( q \)-reduced.
\end{algorithmic}
\end{algorithm}

The Dhar's algorithm implementation from \texttt{chipfiring} can be used as follows; it returns the firing set as well as an "orientation," which is defined in Definition \ref{def_orientation}:

\begin{lstlisting}[language=Python]
from chipfiring import CFGraph, CFDivisor, DharAlgorithm

# Initialize Dhar's algorithm with a CFGraph, CFDivisor, and the name of the `q` vertex
dhar_algo = DharAlgorithm(graph, divisor, "Bob")

# Run the algorithm to find the maximal legal firing set
firing_set, orientation = dhar_algo.run()
\end{lstlisting}

\subsection{An Efficient Winnability Determination Algorithm}

Now, using all the tools \& algorithms we have developed so far, let's devise an efficient winnability determination algorithm. We formalize the algorithm in the form of pseudocode below. 
Recall that $D_q(q) \geq 0$ directly translates to \textbf{winnability} by Proposition \ref{prop:lin-eq-q-red}.

\begin{algorithm}
\caption{Efficient Winnability Determination Algorithm}
\label{winnability-algo}
\begin{algorithmic}[1]
\REQUIRE $D \in \Div(G)$
\ENSURE TRUE if $D$ is winnable; FALSE if not
\STATE Choose source vertex $q \in V$
\STATE Let $\widetilde{V} \gets V \setminus \{q\}$ \hfill /* Set of non-source vertices */
\STATE Fire from $q$ and share among vertices in $\widetilde{V}$ until only $q$ is in debt
\WHILE{Dhar's Algorithm returns a non-empty set}
    \STATE Apply Dhar's Algorithm to current configuration $c$
    \STATE Fire the returned set if non-empty
\ENDWHILE
\STATE $D_q(q) \gets \deg(D) - \deg(c)$ \hfill /* Calculate chips on source vertex */
\IF{$D_q(q) \geq 0$}
    \RETURN TRUE  \hfill /* winnable */
\ELSE
    \RETURN FALSE  \hfill /* unwinnable */
\ENDIF
\end{algorithmic}
\end{algorithm}

Furthermore, one of the strategies that can be used in step 3 of the algorithm below is to systematically concentrate all debt at our distinguished vertex $q$ through a reverse-distance prioritized approach. The key insight is that we can move debt away from vertices furthest from $q$ first, working our way inward toward $q$ systematically.

This strategy works by ordering all non-source vertices based on their distance from q, which we can determine through a simple \textit{breadth-first search (BFS) traversal}. Then, proceeding from the vertices furthest from $q$ and moving inward, we perform borrowing operations on any vertex with a negative chip count (in debt). When a vertex borrows, it receives chips equal to its degree but must distribute one chip along each incident edge to its neighbors. This borrowing operation effectively pushes debt closer to $q$.

By processing vertices in reverse distance order (from furthest to closest to q), we ensure that once a vertex is out of debt, it remains out of debt throughout the process. This is because we only process vertices closer to q after all vertices further from q have been handled. The result is a configuration where only the source vertex q may be in debt, with all other vertices having non-negative chip counts. In practice, the number of Dhar iterations is typically small. This makes the algorithm more efficient than exhaustive approaches that might simulate all possible chip-firing sequences. 

As an illustration, we implemented this BFS-based debt clustering strategy along with Dhar's algorithm as a Python API in the \texttt{chipfiring} package. Upon running our implementation on the example from Figure \ref{fig:config-equivalence}, we can see that it outputs the following: ``The game is winnable using Dhar's algorithm with a legal firing set of {Alice, Charlie, Elise}''.

An example demonstrating how to use our package's implementation of the Efficient Winnability Determination Algorithm is below. Note that our implementation has an optional \texttt{optimized} mode that utilizes various matrix optimizations, theorems, lemmas, and properties to accelerate determination; note that you might not get the associated induced orientation and q-reduced divisor under this mode. There is also an optional \texttt{visualize} parameter that, if set to true, automatically generates an interactive visualization that will allow you to view the steps taken by \texttt{EWD} to generate the determination. The visualization of running EWD on the example from \autoref{fig:div-setup} can be seen at \autoref{fig:EWD-viz}.

\begin{lstlisting}[language=Python]
from chipfiring import CFGraph, CFDivisor, EWD

# Given a CFGraph and CFDivisor, run the EWD algorithm to check for winnability.
# The function returns the winnability status, the q-reduced divisor, 
# the final edge orientation, and a visualizer object (if enabled).
is_winnable, q_reduced, orientation, visualizer = EWD(
    graph, 
    divisor, 
    optimized = True, 
    visualize = True
)

# Visualize
# Note that this launches a web-based interactive visualization at localhost:8050,
# so the program will halt until the user manually closes the visualization
# (Ctrl + C in the terminal)
visualizer.visualize()
\end{lstlisting}

\begin{figure}
    \centering
    \includegraphics[width=0.5\linewidth]{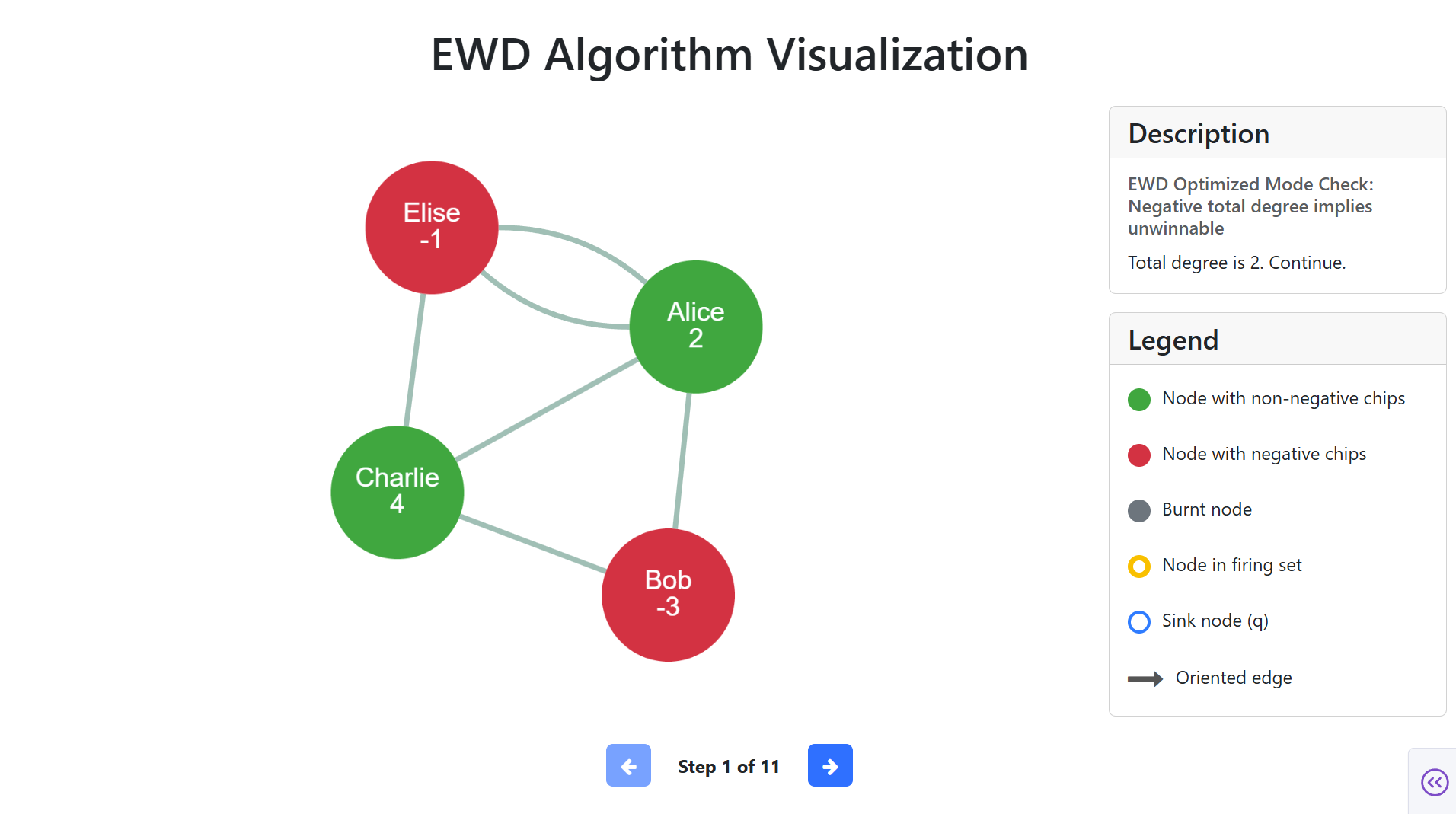}
    \caption{Screenshot of an example visualization of the EWD Algorithm}
    \label{fig:EWD-viz}
\end{figure}

\section{Riemann-Roch for Graphs \& Rank Determination}
\label{ch:Riemann-roch}

In this section, we will build on some final tools, mainly including orientations and Baker--Norine ranks, which will, in turn, help us state and better understand the Riemann--Roch theorem for graphs. This will help us to quantify ``winnability,'' and study algorithms to quickly do the same. The notion of ``rank'' considered herein originated in \cite{baker2007riemann}, where the graph Riemann--Roch theorem was first stated and proved. The original proof hinges crucially on a set, denoted by $\mathcal{N}$ in \cite{baker2007riemann} and sometimes called ``moderators,'' with two important interpretations: on the one hand, as maximal unwinnable divisors, and on the other hand as divisors arising from graph orientations. Therefore our \texttt{chipfiring} package provides tools for both the Baker--Norine rank and graph orientations.

\definition \textit{Maximal unwinnable divisors} are those unwinnable divisors $D$ such that for any unwinnable divisor $D'$, if $D \leq D'$, then $D = D'$. Equivalently, $D$ is maximal unwinnable if $D$ is unwinnable, but $D + v$ is winnable for each $v \in V$.

\definition \label{def:max-superstable} \textit{Maximal superstable configurations} are those superstable configurations $c$ such that for any superstable configuration $c'$, if $c \leq c'$, then $c = c'$.

\subsection{Orientations and Genus}

\definition \label{def_orientation} A graph orientation $\mathcal{O}$ assigns a specific direction to each edge of the graph within the multiset of edges. For any edge $e = uv \in E$, we define $e^- = u$ as the starting vertex (tail) and $e^+ = v$ as the ending vertex (head), meaning that the edge $e$ is directed from $u$ to $v$.

\definition The \textit{reverse orientation} of $\mathcal{O}$, denoted by $\overline{\mathcal{O}}$, swaps the roles of the head and the tail of each edge. For instance, $e$ under $\overline{\mathcal{O}}$ would become $\overline{e}$ with $\overline{e}^- = v$ as the starting vertex (tail) and $\overline{e}^+ = u$ as the ending vertex (head).

A vertex $w$ is called a \textit{source} for an orientation $\mathcal{O}$ if all edges incident to $w$ are directed away from $w$. Equivalently, if for all edges $e \in \mathcal{O}$, $e^+ \neq w$. Similarly, a vertex $v$ is called a \textit{sink} for an orientation $\mathcal{O}$ if all edges incident to $v$ are directed towards $v$. Equivalently, for all edges $e \in \mathcal{O}$, $e^- \neq v$.

\definition For any vertex $v \in V$ under an orientation $\mathcal{O}$, the \textit{outdegree} counts the edges that start from $v$ (i.e. $\operatorname{outdeg}_{\mathcal{O}}(v) = |\{e \in \mathcal{O} \colon e^- = v\}|$), while the \textit{indegree} is the number of edges directed towards $v$ (i.e. $\operatorname{indeg}_{\mathcal{O}}(v) = |\{e \in \mathcal{O} \colon e^+ = v\}|$).

\definition (cf. \cite[Definition 4.8]{corry2018divisors})  A \textit{divisor} associated with a given orientation $\mathcal{O}$ on the graph $G$ is defined as:

\[
D(\mathcal{O}) = \sum_{v \in V} (\operatorname{indeg}_{\mathcal{O}}(v) - 1) \cdot v.
\]

\definition(cf. \cite[Definition 4.8]{corry2018divisors})  \label{def:conf-of-orientation} The \textit{configuration} associated to a source vertex $q \in V$ under $\mathcal{O}$ is defined as:

\[
c(\mathcal{O}) = \sum_{v \in \widetilde{V}} (\operatorname{indeg}_{\mathcal{O}}(v) - 1) \cdot v.
\]

\definition \label{def:canonical} (cf. \cite[Definition 5.7]{corry2018divisors}) The \textit{canonical divisor} $K$ of a graph $G$ is defined as: \( K := D(\mathcal{O}) + D(\overline{\mathcal{O}}) \). The canonical divisor only depends on graph $G$ and is independent of orientation because for any vertex $v \in V$, we have
\[ K(v) = (\operatorname{indeg}_{\mathcal{O}}(v) - 1) + (\operatorname{outdeg}_{\mathcal{O}}(v) - 1) = \val(v) - 2.\]  and thus, the canonical divisor can also be written as: \( K := \sum_{v \in V} (\val(v) - 2) \cdot v. \)

The name of the ``canonical divisor'' arises from algebraic geometry. See for example \cite[Remark 4.19]{baker2008}. 

\definition A \textit{directed path} is a sequence of vertices connected by edges, where each vertex (except the first and last) acts as both the head of the previous edge and the tail of the next one. 

\textbf{Note:} All vertices along a directed path are distinct, except possibly the start \& end vertices. 

\definition A \textit{directed cycle} is a directed path in which the start and end vertices are identical.

\definition An orientation $\mathcal{O}$ is \textit{acyclic} if it does not contain any cycle of directed edges. 

\textbf{Note:} In the case of acyclic orientations, multiple edges between two vertices must be oriented in the same direction.

\begin{lemma}
\label{lem:acyclic-indeg-determination}
    An acyclic orientation can be determined by its indegree sequence: if $\mathcal{O}$ and $\mathcal{O'}$ are acyclic orientations of $G$ and $\operatorname{indeg}_{\mathcal{O}}(v) = \operatorname{indeg}_{\mathcal{O'}}(v)$ for all $v \in V$, then $\mathcal{O}=\mathcal{O'}$.
\end{lemma}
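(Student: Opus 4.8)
The plan is to argue by induction on the number of vertices $n = |V|$, exploiting the fact that every acyclic orientation of a finite graph has at least one \emph{source}, i.e.\ a vertex of indegree $0$. This auxiliary fact is itself a one-line argument: starting from any vertex and repeatedly walking backwards along an incoming edge, acyclicity forbids revisiting a vertex, so the walk must terminate, and it can only terminate at a vertex with no incoming edge.

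For the base case $n = 1$ there are no edges (graphs are loopless) and the statement is vacuous. For the inductive step, suppose $\mathcal{O}$ and $\mathcal{O}'$ are acyclic orientations of $G$ with $\operatorname{indeg}_{\mathcal{O}}(v) = \operatorname{indeg}_{\mathcal{O}'}(v)$ for all $v$. Choose a vertex $v$ with $\operatorname{indeg}_{\mathcal{O}}(v) = 0$; then $\operatorname{indeg}_{\mathcal{O}'}(v) = 0$ as well, so in \emph{both} orientations every edge incident to $v$ is directed away from $v$, and hence $\mathcal{O}$ and $\mathcal{O}'$ agree on all edges meeting $v$. Now pass to $G' = G - v$ with the induced orientations $\mathcal{O}|_{G'}$ and $\mathcal{O}'|_{G'}$; a sub-orientation of an acyclic orientation is still acyclic. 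Because every edge at $v$ points out of $v$ in both orientations, deleting $v$ lowers the indegree of each $w \neq v$ by exactly $m_{vw}$, the number of edges between $v$ and $w$ in $G$ — a quantity depending only on $G$, not on the orientation. Therefore $\mathcal{O}|_{G'}$ and $\mathcal{O}'|_{G'}$ still have equal indegree sequences, so by the induction hypothesis $\mathcal{O}|_{G'} = \mathcal{O}'|_{G'}$. Combined with agreement on the edges at $v$, this yields $\mathcal{O} = \mathcal{O}'$.

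The one point to handle with a little care is that $G - v$ need not be connected even when $G$ is, so the induction should be phrased for all finite loopless multigraphs rather than only connected ones; this causes no real difficulty. As an independent sanity check, the lemma also admits a direct (non-inductive) proof: if $\mathcal{O} \neq \mathcal{O}'$, let $F$ be the sub-multiset of edges on which the two orientations disagree; equality of indegree sequences forces, at every vertex $v$, the number of $F$-edges entering $v$ under $\mathcal{O}$ to equal the number of $F$-edges leaving $v$ under $\mathcal{O}$, so $\mathcal{O}$ restricted to the nonempty edge set $F$ has indegree equal to outdegree at every vertex and hence contains a directed cycle, contradicting acyclicity of $\mathcal{O}$. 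I do not anticipate any genuine obstacle here; the only slightly delicate bookkeeping is tracking how the indegrees of the remaining vertices change when the source $v$ is removed.
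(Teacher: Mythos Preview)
Your proof is correct; both the inductive argument via source-deletion and the alternative direct argument via the disagreement set $F$ go through without issue. The bookkeeping you flagged (that $G - v$ may disconnect, and that deleting the common source changes each remaining indegree by the orientation-independent quantity $m_{vw}$) is handled properly.

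There is nothing to compare against in the paper itself: the paper does not supply a proof of this lemma but simply defers to \cite[Lemma~4.3]{corry2018divisors}. So your write-up in fact provides strictly more than the paper does here. For what it is worth, the standard textbook argument is essentially your inductive one (peel off a source, recurse), so you are not doing anything exotic. Your second, non-inductive proof is a nice bonus: the observation that equal indegree sequences force $\mathcal{O}|_F$ to be Eulerian (indegree $=$ outdegree at every vertex on the disagreement set $F$), and hence to contain a directed cycle whenever $F \neq \emptyset$, is a clean one-shot contradiction that avoids induction entirely.
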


See \cite[Lemma 4.3]{corry2018divisors} for a proof.

If we revisit Dhar’s algorithm, as introduced in Section \ref{ch:dhar-algo}, we see that a bit more content may be extracted from it: the path of the fire tells us an acyclic orientation. This acyclic orientation is useful in other problems. We include the pseudocode for modified Dhar's algorithm below. This algorithm is based on the discussion in \cite[\S 4.2]{corry2018divisors}.

\begin{algorithm}
\caption{Orientation-based Dhar's Algorithm}
\begin{algorithmic}[1]
\REQUIRE a nonnegative configuration $c$ and source vertex $q$
\ENSURE a pair $(S, \mathcal{O})$ where $S \subseteq \widetilde{V}$ is a legal firing set (empty if and only if $c$ is superstable) and $\mathcal{O}$ is the resulting orientation
\STATE \textbf{initialization:} $S \gets \widetilde{V}$, $\mathcal{O} \gets \emptyset$, $B \gets \{q\}$ /* $B$ is burning set */
\WHILE{$B \neq V$}
    \FOR{each $v \in S$}
        \STATE $E_v \gets$ edges between $v$ and $B$ /* potentially burning edges */
        \IF{$|E_v| > c(v)$}
            \STATE $S \gets S \setminus \{v\}$
            \STATE $B \gets B \cup \{v\}$
            \FOR{each $e \in E_v$}
                \STATE Orient $e$ towards $v$ in $\mathcal{O}$ /* record burning direction */
            \ENDFOR
        \ENDIF
    \ENDFOR
    \IF{no new vertices added to $B$}
        \RETURN $(S, \mathcal{O})$ /* $c$ is not superstable */
    \ENDIF
\ENDWHILE
\RETURN $(\emptyset, \mathcal{O})$  /* $c$ is superstable */
\end{algorithmic}
\end{algorithm}

\newpage

In the package, Dhar's algorithm returns the orientation in the form of a \texttt{CFOrientation} object:

\begin{lstlisting}[language=Python]
from chipfiring import CFGraph, CFDivisor, DharAlgorithm

# Run Dhar's algorithm
dhar = DharAlgorithm(graph, divisor, "Alice")
unburnt_vertices, orientation = dhar.run()

# Examine the resulting orientation
# Note that the dict will omit any unoriented vertices
orientation_dict = orientation.to_dict()[orientations]
print(orientation_dict)
\end{lstlisting}

A \texttt{CFOrientation} object can also be initialized directly for experimentation as follows: 

\begin{lstlisting}[language=Python]
# Edges between Alice and Bob are directed from Alice to Bob
# Edges between Bob and Charlie are directed from Bob to Charlie
# All other edges are unoriented
orientations = [("Alice", "Bob"), ("Bob", "Charlie")]
orientation = CFOrientation(graph, orientations)
\end{lstlisting}

The crucial facts linking orientations to winnability is the following. Proofs may be found in \cite{corry2018divisors}. See also \cite{baker2007riemann}.

\begin{proposition}(\cite[Theorem 4.8]{corry2018divisors})
\label{prop:stable-bij}
Fix $q \in V$. Then, the correspondence \(\mathcal{O} \mapsto c(\mathcal{O}).\) is a bijection between acyclic orientations $\mathcal{O}$ of $G$, with unique source q, and maximal superstable configurations $c(\mathcal{O}) \in \operatorname{Config}(G, q)$.
\end{proposition}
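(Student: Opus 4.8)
The plan is to verify the three defining properties of a bijection: well-definedness (the image really is a maximal superstable configuration), injectivity, and surjectivity. Injectivity is essentially free: if $c(\mathcal{O}_1) = c(\mathcal{O}_2)$, then $\operatorname{indeg}_{\mathcal{O}_1}$ and $\operatorname{indeg}_{\mathcal{O}_2}$ agree on $\widetilde{V}$, and both vanish at $q$ since $q$ is a source of each orientation; thus the two acyclic orientations have identical indegree sequences, and Lemma~\ref{lem:acyclic-indeg-determination} forces $\mathcal{O}_1 = \mathcal{O}_2$.

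For well-definedness I would first check that $c(\mathcal{O})$ is superstable. Since $q$ is the \emph{unique} source, every $v \neq q$ has $\operatorname{indeg}_{\mathcal{O}}(v) \geq 1$ (a vertex of indegree $0$ would be a source), so $c(\mathcal{O}) \geq 0$. Given a nonempty $S \subseteq \widetilde{V}$, restrict $\mathcal{O}$ to the induced subgraph $G[S]$; this is again acyclic, so it has a vertex $v$ with no in-edge inside $G[S]$. Every edge counted by $\operatorname{indeg}_{\mathcal{O}}(v)$ then joins $v$ to $V \setminus S$, so $\operatorname{indeg}_{\mathcal{O}}(v) \leq \outdeg_S(v)$, whence $c(\mathcal{O})(v) = \operatorname{indeg}_{\mathcal{O}}(v) - 1 < \outdeg_S(v)$; by Definition~\ref{def:superstable}, $c(\mathcal{O})$ is superstable. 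Maximality of $c(\mathcal{O})$, and surjectivity, I would then derive together from one key lemma.

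The key lemma is: if $c'$ is superstable and $\mathcal{O}'$ is the orientation returned by the orientation-based Dhar algorithm run from source $q$, then $\mathcal{O}'$ is acyclic with unique source $q$ and $c' \leq c(\mathcal{O}')$ on $\widetilde{V}$. The order in which vertices ignite is a topological order for $\mathcal{O}'$ (each edge is oriented from its earlier-burnt endpoint toward its later-burnt one), so $\mathcal{O}'$ is acyclic; $q$ burns first and is a source, while any $w \neq q$ ignites only after at least one incident edge has burnt and so has an in-edge. Crucially, at the instant $v$ ignites, the more-than-$c'(v)$ incident edges already burnt are exactly those that become the in-edges of $v$ in $\mathcal{O}'$, giving $\operatorname{indeg}_{\mathcal{O}'}(v) \geq c'(v) + 1$, i.e. $c'(v) \leq c(\mathcal{O}')(v)$. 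A one-line count then shows $\deg c(\mathcal{O}) = \sum_{v \neq q}(\operatorname{indeg}_{\mathcal{O}}(v) - 1) = |E| - (|V| - 1)$ for \emph{every} acyclic orientation with unique source $q$, a constant. Hence every superstable $c'$ has $\deg c' \leq \deg c(\mathcal{O}') = |E| - |V| + 1$; so if $c(\mathcal{O}) \leq c'$ with $c'$ superstable, comparing degrees forces $\deg c' = \deg c(\mathcal{O})$ and therefore $c' = c(\mathcal{O})$, proving maximality. For surjectivity, given a maximal superstable $c'$, apply the orientation-based Dhar algorithm to get $\mathcal{O}'$, which is acyclic with unique source $q$; then $c(\mathcal{O}')$ is superstable by the previous paragraph and $c' \leq c(\mathcal{O}')$, so maximality of $c'$ gives $c' = c(\mathcal{O}')$, exhibiting $c'$ in the image.

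I expect the main obstacle to be the key lemma itself — in particular, checking carefully that the incident edges already burnt at the moment a vertex ignites are \emph{precisely} its future in-edges in $\mathcal{O}'$ (so that the inequality $\operatorname{indeg}_{\mathcal{O}'}(v) \geq c'(v)+1$ is exact in the sense needed), which requires attention to parallel edges and to several vertices igniting within the same sweep of the algorithm, together with the verification that $q$ is the \emph{unique} source of the orientation the algorithm produces. Everything else is either a short counting argument or a direct appeal to Definitions~\ref{def:superstable} and \ref{def:max-superstable} and Lemma~\ref{lem:acyclic-indeg-determination}.
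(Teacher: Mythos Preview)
The paper does not prove this proposition itself; it simply cites \cite[Theorem~4.8]{corry2018divisors} and remarks that ``proofs may be found in \cite{corry2018divisors}.'' Your argument is correct and is essentially the standard proof one finds in that reference, organized around the orientation-based Dhar algorithm that the paper presents just before stating the proposition: injectivity via Lemma~\ref{lem:acyclic-indeg-determination}, superstability of $c(\mathcal{O})$ by locating a source of $\mathcal{O}$ restricted to $G[S]$, the degree identity $\deg c(\mathcal{O}) = |E| - |V| + 1$, and the domination $c' \leq c(\mathcal{O}')$ coming from the burning threshold. Your concern about the key lemma is well placed but dissolves once one notes that, in the algorithm as written, $B$ is updated inside the inner loop, so vertices ignite in a definite linear order; the in-edges of $v$ in $\mathcal{O}'$ are then exactly the set $E_v$ recorded at the step $v$ burns, giving $\operatorname{indeg}_{\mathcal{O}'}(v) = |E_v| \geq c'(v)+1$ with no ambiguity from parallel edges or same-sweep ignition.
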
 

\definition \label{def_genus} The \textit{genus} of a graph is its topological Euler characteristic: \( g = |E| - |V| + 1. \)

The word \emph{genus} is sometimes used for other notions in graph theory. Here, we define it in this way so that it mirrors the genus in the classical Riemann--Roch formula for algebraic curves. Alternatively, in the setup of \cite{baker2008} (for example), genus $g$ algebraic curves may be specialized / tropicalized to genus $g$ graphs.

\begin{proposition}  (\cite[Corollary 4.9(1,2)]{corry2018divisors})
\label{prop:conf-div-max-unwinnable}
\label{max-unwinnable}
Let $c$ be a superstable configuration and $D$ be a divisor. Then, 
\begin{enumerate}
    \item $c$ is maximal if and only if $\deg(c) = g$.
    \item $D$ is maximal winnable if and only if its q-reduced form is $c-q$, with maximal superstable $c$.
\end{enumerate}
\end{proposition}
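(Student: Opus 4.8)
The plan is to handle the two parts in order, leaning on three facts already in hand: the bijection of Proposition~\ref{prop:stable-bij}, the uniqueness of the $q$-reduced divisor, and the winnability criterion of Proposition~\ref{prop:lin-eq-q-red}. A small but convenient observation I would isolate first is that a divisor $D'$ with $D'(v)\ge 0$ for every $v\ne q$ is $q$-reduced exactly when its configuration part $c=D'|_{\widetilde V}$ is superstable (both defining conditions of $q$-reducedness constrain only the vertices $v\ne q$), and that then $D'(q)=\deg(D')-\deg(c)$. I also note that the printed ``maximal winnable'' should read ``maximal unwinnable'': if $D_q=c-q$ with $c$ superstable then $D_q(q)=-1<0$, so $D$ is unwinnable by Proposition~\ref{prop:lin-eq-q-red}; it is that statement I would prove.

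For (1), the first step is a one-line edge count: for an acyclic orientation $\mathcal{O}$ with unique source $q$ we have $\sum_{v\in V}\operatorname{indeg}_{\mathcal{O}}(v)=|E|$ and $\operatorname{indeg}_{\mathcal{O}}(q)=0$, so $\deg(c(\mathcal{O}))=\sum_{v\in\widetilde V}(\operatorname{indeg}_{\mathcal{O}}(v)-1)=|E|-(|V|-1)=g$. By Proposition~\ref{prop:stable-bij} every maximal superstable configuration is of this form, hence has degree $g$. Next I would observe that superstable configurations form a \emph{finite} poset (taking $S=\{v\}$ in Definition~\ref{def:superstable} forces $0\le c(v)<\val(v)$), so every superstable configuration sits below a maximal one; this yields $\deg(c)\le g$ for all superstable $c$. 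Finally, if $\deg(c)=g$ but $c$ were not maximal, a strictly larger superstable $c'$ would have $\deg(c')>g$, which is impossible; so $\deg(c)=g$ already forces maximality, and (1) follows.

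For (2) I would first record the key lemma: if $0\le c_1\le c_2$ as configurations and $c_2$ is superstable, then $c_1$ is superstable — for a nonempty $S\subseteq\widetilde V$, the vertex $v\in S$ witnessing $c_2(v)<\outdeg_S(v)$ also witnesses $c_1(v)\le c_2(v)<\outdeg_S(v)$. Then for $(\Leftarrow)$, assuming $D_q=c-q$ with $c$ maximal superstable: $D_q(q)=-1$ shows $D$ unwinnable; for $v=q$, $D+q\sim D_q+q$ still has configuration part $c$, so it is already $q$-reduced with value $0$ at $q$, hence winnable; for $w\ne q$, $D+w\sim D_q+w$, a divisor of degree $\deg(c-q)+1=\deg(c)=g$ by (1), so its $q$-reduced form $\widehat D$ has configuration part $\widehat c$ with $\deg(\widehat c)\le g$, giving $\widehat D(q)=g-\deg(\widehat c)\ge 0$ and winnability; hence $D$ is maximal unwinnable. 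For $(\Rightarrow)$, assuming $D$ maximal unwinnable: unwinnability gives $D_q(q)\le -1$, while $D+q$ winnable together with $D+q\sim D_q+q$ ($q$-reduced, value $D_q(q)+1$ at $q$) gives $D_q(q)+1\ge 0$; hence $D_q(q)=-1$ and $D_q=c-q$ with $c=D_q|_{\widetilde V}$ superstable. If $c$ were not maximal, extend it to a maximal superstable $c''\ge c$ and choose $w$ with $c(w)<c''(w)$; then $0\le c+\chi_w\le c''$, so $c+\chi_w$ is superstable by the lemma, whence $D_q+w$ (configuration part $c+\chi_w$, value $-1$ at $q$) is $q$-reduced, so $D+w\sim D_q+w$ is still unwinnable — contradicting maximality of $D$. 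Thus $c$ must be maximal superstable.

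Essentially everything here is bookkeeping once Proposition~\ref{prop:stable-bij} and the uniqueness of the $q$-reduced form are available; the one idea that must be spotted is the downward-closure lemma for superstability, which is precisely what lets the $(\Rightarrow)$ half of (2) close without extra effort. The only other point to be careful about is the existence of a maximal superstable configuration above a given superstable one, which is the same finiteness argument reused from (1).
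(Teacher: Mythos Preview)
Your argument is correct, and you rightly spot that ``maximal winnable'' in part~(2) is a typo for ``maximal unwinnable.'' The paper itself does not prove this proposition at all: it is stated as a quotation of \cite[Corollary~4.9(1,2)]{corry2018divisors}, with the surrounding text explicitly sending the reader to that reference for proofs. So there is no ``paper's own proof'' to compare against; your write-up simply fills in what the paper omits.

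For what it is worth, your approach is the standard one and matches the development in Corry--Perkinson: the degree computation for $c(\mathcal{O})$ via edge counting, the finiteness of superstable configurations from the bound $c(v)<\val(v)$, and the downward-closure of superstability are exactly the ingredients used there. The only cosmetic remark is that in the $(\Leftarrow)$ direction of~(2), once you have shown $\deg(D+w)=g$ you could equally well invoke Proposition~\ref{deg-max-unwinn}(2) directly (``$\deg\ge g$ implies winnable''), but of course that proposition is itself a consequence of what you are proving, so your self-contained argument via $\deg(\widehat c)\le g$ is the logically cleaner choice.
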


\begin{proposition} (\cite[Corollary 4.9(3,4)]{corry2018divisors})\label{q-red-bij} \label{deg-max-unwinn} Let $D$ be a divisor on $G$.
\begin{enumerate}
    \item The correspondence \(\mathcal{O} \mapsto D(\mathcal{O}) \) is a bijection between acyclic orientations of $G$ with unique source $q$ and maximal unwinnable q-reduced divisors of $G$.
    \item If $D$ is a maximal unwinnable divisor, then $\deg(D) = g - 1$. Thus, $\deg(D) \geq g$ implies $D$ is winnable.
\end{enumerate}
\end{proposition}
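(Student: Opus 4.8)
The plan is to deduce both parts from the earlier structural results, chiefly Proposition~\ref{prop:stable-bij} (the bijection between acyclic orientations with unique source $q$ and maximal superstable configurations) and Proposition~\ref{max-unwinnable}, together with the fact that degree and $q$-reduced forms are linear-equivalence invariants.

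For Part 1, I would first record the identity $D(\mathcal{O}) = c(\mathcal{O}) - q$: an acyclic orientation with unique source $q$ has $\operatorname{indeg}_{\mathcal{O}}(q) = 0$, so the $q$-coefficient of $D(\mathcal{O})$ is $-1$, while on $\widetilde V$ the coefficients of $D(\mathcal{O})$ and $c(\mathcal{O})$ coincide. Next I would check that the shift $c \mapsto c - q$ is a bijection from maximal superstable configurations onto maximal unwinnable $q$-reduced divisors: if $c \geq 0$ is superstable then $c - q$ meets both clauses of Definition~\ref{def_qreduced} (clause~1 since $c \geq 0$ on $\widetilde V$, clause~2 being verbatim the superstability condition), so $c - q$ is $q$-reduced; since it is its own $q$-reduced form, Proposition~\ref{max-unwinnable}(2) makes it maximal unwinnable precisely when $c$ is maximal superstable, and conversely every maximal unwinnable $q$-reduced divisor equals its own $q$-reduced form and so, by the same proposition, has the shape $c - q$ with $c$ maximal superstable. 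Composing the bijection of Proposition~\ref{prop:stable-bij} with $c \mapsto c - q$ gives $\mathcal{O} \mapsto c(\mathcal{O}) - q = D(\mathcal{O})$, which is Part 1. (One can also note directly that $\deg D(\mathcal{O}) = \sum_v(\operatorname{indeg}_{\mathcal{O}}(v) - 1) = |E| - |V| = g - 1$.)

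For the degree assertion in Part 2, let $D$ be maximal unwinnable and fix any $q$. Maximal unwinnability is preserved under linear equivalence (if $D \sim D'$ then $D' + v \sim D + v$ for every $v$), so the $q$-reduced divisor $D_q \sim D$ is again maximal unwinnable; by Proposition~\ref{max-unwinnable}(2) it equals $c - q$ with $c$ maximal superstable, and by Proposition~\ref{max-unwinnable}(1), $\deg(c) = g$. Since degree is a linear-equivalence invariant, $\deg(D) = \deg(D_q) = \deg(c) - 1 = g - 1$. The final implication follows by contraposition: if $D$ is unwinnable with $q$-reduced form $D_q$, then $D_q(q) \leq -1$ by Proposition~\ref{prop:lin-eq-q-red}, while $c := D_q|_{\widetilde V}$ is superstable and hence dominated by a maximal superstable configuration (superstability forces $0 \leq c(v) \leq \val(v) - 1$, so the relevant poset is finite), which has degree $g$ by Proposition~\ref{max-unwinnable}(1); thus $\deg(D) = \deg(D_q) = \deg(c) + D_q(q) \leq g - 1 < g$.

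I expect essentially all the mathematical weight to sit in the already-proven Propositions~\ref{prop:stable-bij} and~\ref{max-unwinnable}; the only genuinely new work is the bookkeeping that pins the shift $c \mapsto c - q$ against the defining conditions, matching $q$-reducedness with superstability and ``maximal'' with ``maximal'' on both sides. The subtle point in Part 2 is the quantifier: ``maximal unwinnable $\Rightarrow \deg = g - 1$'' concerns \emph{all} such divisors, not only their $q$-reduced representatives, so one must use invariance of both degree and maximal unwinnability under linear equivalence to push the bijection's degree count to the general case; an alternative route --- embedding an arbitrary unwinnable $D$ below some maximal unwinnable $D'$ --- works too but requires separately justifying the existence of such a $D'$.
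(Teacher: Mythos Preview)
Your argument is correct. The paper itself does not supply a proof of this proposition; it simply records the statement and defers to \cite[Corollary 4.9(3,4)]{corry2018divisors}. So there is no ``paper's own proof'' to compare against beyond the scaffolding that Propositions~\ref{prop:stable-bij} and~\ref{max-unwinnable} provide, and your reduction to those two results is exactly the intended route.

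A couple of small remarks. First, Proposition~\ref{max-unwinnable}(2) as printed in the paper says ``maximal winnable,'' which is a typo for ``maximal unwinnable''; you have read it correctly. Second, your contrapositive argument for the final implication is clean: the bound $0 \leq c(v) \leq \val(v)-1$ comes from applying the superstability condition to the singleton $S=\{v\}$, so the superstable configurations form a finite poset and every superstable $c$ sits below some maximal one of degree $g$, giving $\deg(c) \leq g$ and hence $\deg(D) = \deg(c) + D_q(q) \leq g-1$. The alternative you mention --- dominating an unwinnable $D$ by a maximal unwinnable $D'$ --- would require the same finiteness observation transported to divisors, so your chosen route is no less direct.
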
 

\subsection{Rank}

One of our winnability questions pertained to \textit{``Are some games more winnable than others?''} One way to answer this is by defining the \textit{(Baker--Norine) rank function}, which is central to proving the Riemann-Roch theorem for graphs and will lend us the ability to formalize further the answer to the question \textit{``How many chips can be removed for the divisor to remain still winnable?''}

\definition \label{rank-def} The \textit{rank function} $r(D)\colon D \to \mathbb{Z}$ is defined as:
\begin{enumerate}
    \item $r(D) = -1$ if and only if $|D| = \emptyset$.
    \item $r(D) \geq k$ for $k \geq 0$ if and only if the dollar game is winnable starting from all divisors obtained from $D$ by removing $k$ dollars.
    \item $r(D) = k$ if and only if $r(D) \geq k$ and there exists an effective divisor $E$ such that $\deg(E) = k + 1$ and $D - E$ is unwinnable.
\end{enumerate}

It is worth restating that the problem of computing the rank of a general divisor on a general graph is \textbf{NP}-hard \cite{kiss2015chip}, which means the time it takes for an algorithm to compute the rank is non-polynomial. Specifically, this time grows exponentially with the size of the graph \cite[\S 5.1]{corry2018divisors}. So while rank can be computed in small or special cases, one should not expect our package's rank function to scale well to large graphs in general.

\begin{corollary}
\label{rank-ineq}
For divisors $D, D'$ with $r(D),r(D') \geq 0$, we have $r(D+D') \geq r(D) + r(D')$.
\end{corollary}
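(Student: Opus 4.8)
The plan is to unwind Definition \ref{rank-def} and combine two elementary facts: a sum of effective divisors is effective, and linear equivalence is compatible with addition (since $\Prin(G)$ is a subgroup of $\Div(G)$, $D_1 \sim D_1'$ and $D_2 \sim D_2'$ together imply $D_1 + D_2 \sim D_1' + D_2'$). I would set $k := r(D)$ and $k' := r(D')$, both $\ge 0$ by hypothesis. By part (2) of Definition \ref{rank-def}, establishing $r(D + D') \ge k + k'$ amounts to showing that $(D + D') - E$ is winnable for every effective divisor $E$ with $\deg(E) = k + k'$, so I would fix an arbitrary such $E$.

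The first step is to split $E = E_1 + E_2$ with $E_1, E_2$ effective, $\deg(E_1) = k$, and $\deg(E_2) = k'$. This is possible exactly because $0 \le k \le \deg(E)$: going through the vertices one at a time, choose integers $0 \le E_1(v) \le E(v)$ summing to exactly $k$, and put $E_2 := E - E_1$, which is then effective of degree $k'$. This step is the only place the hypotheses $r(D), r(D') \ge 0$ actually enter.

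Next I would invoke the hypotheses on the two pieces. Since $r(D) \ge k$ and $E_1$ is effective of degree $k$, Definition \ref{rank-def}(2) gives that $D - E_1$ is winnable, so there is an effective $F_1 \sim D - E_1$; likewise there is an effective $F_2 \sim D' - E_2$. Then
\[
(D + D') - E \;=\; (D - E_1) + (D' - E_2) \;\sim\; F_1 + F_2,
\]
and $F_1 + F_2 \ge 0$, so $(D + D') - E$ is winnable. Since $E$ was an arbitrary effective divisor of degree $k + k'$, this yields $r(D + D') \ge k + k' = r(D) + r(D')$.

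I do not expect a serious obstacle here — the argument is mostly bookkeeping. The one point needing care is the degree-splitting $E = E_1 + E_2$ into effective pieces of the prescribed degrees, and I would also be careful to quote part (2) of Definition \ref{rank-def} in the right direction: ``$r \ge k$'' is the \emph{universal} statement that removing any $k$ chips leaves a winnable divisor, which is exactly what is used (for $D$ and for $D'$) and exactly what is proved (for $D + D'$).
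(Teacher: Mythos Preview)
Your argument is correct: splitting an arbitrary effective $E$ of degree $k+k'$ as $E_1+E_2$ with the prescribed degrees, applying the rank hypothesis to each summand, and then adding the resulting effective representatives is exactly the standard proof, and every step is justified. The paper itself states Corollary~\ref{rank-ineq} without proof, so there is nothing further to compare against; your write-up would serve as a complete proof in its place.
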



\subsection{Riemann-Roch Theorem for Graphs}

\begin{theorem} \label{riemann-roch-graphs} (Riemann-Roch for graphs \cite{baker2007riemann}). Let $D$ be a divisor on a (loopless, undirected) graph $G$ of genus $g = |E| - |V| + 1$ with canonical divisor $K$. Then,
\[
r(D) - r(K - D) = 1 + \deg(D) - g.
\]
\end{theorem}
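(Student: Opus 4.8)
The plan is to recast the theorem in the language of $q$-reduced divisors and acyclic orientations, following Baker--Norine \cite{baker2007riemann} (see also \cite{corry2018divisors}). Fix a vertex $q\in V$ and assemble the available ingredients. From Definition~\ref{def:canonical}, $K=D(\mathcal{O})+D(\overline{\mathcal{O}})$ for every orientation $\mathcal{O}$, and $\deg K=\sum_{v}(\val(v)-2)=2|E|-2|V|=2g-2$. From Definition~\ref{rank-def} one extracts the equivalent description
\[
r(D)=-1+\min\{\deg E : E\ge 0,\ |D-E|=\emptyset\},
\]
the minimum being taken over a nonempty set since a divisor of sufficiently negative degree is unwinnable. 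By Proposition~\ref{q-red-bij}(2) an unwinnable divisor has degree $\le g-1$, hence is dominated by a \emph{maximal} unwinnable divisor; by Proposition~\ref{q-red-bij}(1) the maximal unwinnable $q$-reduced divisors are precisely the $D(\mathcal{O})$ with $\mathcal{O}$ acyclic of unique source $q$. Finally, winnability depends only on the linear equivalence class, and adding an effective divisor to a winnable divisor keeps it winnable, so any divisor dominated by an unwinnable divisor is unwinnable.

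The substantive content — the part of Baker--Norine that really does the work — is the lemma that $D(\mathcal{O})$ is unwinnable for \emph{every} acyclic orientation $\mathcal{O}$, not merely those having $q$ as unique source (the latter being covered by Proposition~\ref{q-red-bij}(1)). I would prove this directly. Recall $D\sim D'$ iff $D-D'=L\sigma$ for some firing script $\sigma\in\Z^{V}$, so it suffices to exhibit, for each $\sigma$, a vertex where $D(\mathcal{O})+L\sigma$ is negative. Subtracting a constant (which leaves $L\sigma$ unchanged) we may assume $\min_{v}\sigma(v)=0$; set $A=\{v:\sigma(v)=0\}\ne\emptyset$. If $A=V$ then $D(\mathcal{O})+L\sigma=D(\mathcal{O})$ is $-1$ at any source of $\mathcal{O}$, which exists because $\mathcal{O}$ is acyclic. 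Otherwise choose $v^{*}\in A$ that is a source of the restricted acyclic orientation $\mathcal{O}|_{A}$; then every edge oriented into $v^{*}$ leaves $A$, so writing $b=\operatorname{indeg}_{\mathcal{O}}(v^{*})$ there are at least $b$ edges joining $v^{*}$ to $V\setminus A$, each with endpoint where $\sigma\ge 1$. Since $\sigma(v^{*})=0$ we get $(L\sigma)(v^{*})=-\sum_{v^{*}w\in E}\sigma(w)\le -b$, whence $(D(\mathcal{O})+L\sigma)(v^{*})\le(b-1)-b=-1<0$.

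Granting the lemma, the remainder is formal manipulation. For a class $\gamma\in\Pic(G)$ put $\phi(\gamma)=\min_{D'\in\gamma}\sum_{v}\max(0,D'(v))$. First, $|D-E|=\emptyset$ holds iff $D-E\sim D(\mathcal{O})-F$ for some acyclic $\mathcal{O}$ and some effective $F$: the forward direction dominates $D-E$ by a maximal unwinnable divisor and applies Proposition~\ref{q-red-bij}(1), while the reverse direction uses the lemma together with the monotonicity facts of the first paragraph. Rewriting $D-E\sim D(\mathcal{O})-F$ as $E-F\in[D-D(\mathcal{O})]$ and minimising $\deg E$ over $E,F\ge 0$ with that difference class, then over $\mathcal{O}$, gives
\[
r(D)+1=\min_{\mathcal{O}\text{ acyclic}}\phi([D-D(\mathcal{O})]).
\]
The coordinatewise identity $\max(0,a)-\max(0,-a)=a$ yields $\phi(\gamma)=\deg(\gamma)+\phi(-\gamma)$; since $\deg[D-D(\mathcal{O})]=\deg D-(g-1)$ for every $\mathcal{O}$, and since $\mathcal{O}\mapsto\overline{\mathcal{O}}$ is a bijection of the acyclic orientations with $K-D(\mathcal{O})=D(\overline{\mathcal{O}})$, this becomes
\[
r(D)+1=(\deg D-g+1)+\min_{\mathcal{O}}\phi([D(\mathcal{O})-D])=(\deg D-g+1)+\min_{\mathcal{O}}\phi([(K-D)-D(\mathcal{O})]).
\]
By the displayed rank formula applied to $K-D$ the last minimum equals $r(K-D)+1$, and subtracting gives $r(D)-r(K-D)=\deg D-g+1$.

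The main obstacle is the lemma of the second paragraph; everything downstream is bookkeeping with the propositions already established, the reversal symmetry $\mathcal{O}\leftrightarrow\overline{\mathcal{O}}$, and the elementary identity $\phi(\gamma)=\deg\gamma+\phi(-\gamma)$. If one prefers, the lemma may simply be quoted from \cite{baker2007riemann} or \cite{corry2018divisors}, but the minimal-set argument above keeps the proof self-contained.
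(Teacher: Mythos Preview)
The paper does not prove this theorem; it simply states it and cites Baker--Norine \cite{baker2007riemann}, noting in the surrounding discussion that the original proof proceeds via the ``moderators'' $\mathcal{N}$ (maximal unwinnable divisors, equivalently divisors of acyclic orientations). Your argument is correct and is precisely that Baker--Norine proof: the lemma that $D(\mathcal{O})$ is unwinnable for every acyclic $\mathcal{O}$, the rewriting $r(D)+1=\min_{\mathcal{O}}\phi([D-D(\mathcal{O})])$, and the reversal symmetry $K-D(\mathcal{O})=D(\overline{\mathcal{O}})$ together with $\phi(\gamma)=\deg\gamma+\phi(-\gamma)$ are exactly the steps in \cite{baker2007riemann}. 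One small point worth making explicit in your forward direction: to invoke Proposition~\ref{q-red-bij}(1) after dominating $D-E$ by a maximal unwinnable $M$, you are implicitly using that maximal unwinnability is invariant under linear equivalence (so that the $q$-reduced form $M_q$ is again maximal unwinnable and hence equals some $D(\mathcal{O})$); this is immediate since $M+v\sim M_q+v$, but it is the one link in the chain not literally contained in the cited propositions.
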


The reader new to this story may enjoy trying to prove the following consequences of Riemann--Roch. 

\begin{corollary}
\label{cor:cannonical-max-unwinn}
    A divisor $D$ is maximal unwinnable if and only if the divisor $K - D$ is maximal unwinnable.
\end{corollary}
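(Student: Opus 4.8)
The plan is to run everything through the Riemann--Roch formula (Theorem \ref{riemann-roch-graphs}), using two auxiliary facts already available: that every maximal unwinnable divisor has degree exactly $g-1$ (Proposition \ref{deg-max-unwinn}, part 2), and that winnability is monotone, meaning that if $D' \le D$ and $D'$ is winnable then $D$ is winnable (immediate, since $D' \sim E \ge 0$ forces $D \sim E + (D - D') \ge 0$), so contrapositively $D$ unwinnable and $D' \le D$ makes $D'$ unwinnable. I will also use that $\deg K = 2g-2$, which follows from Definition \ref{def:canonical} since $\sum_{v}(\val(v) - 2) = 2|E| - 2|V|$.

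For the forward direction, suppose $D$ is maximal unwinnable. Then $\deg D = g-1$, hence $\deg(K-D) = (2g-2) - (g-1) = g-1$ as well. Applying Riemann--Roch to $D$ gives $r(D) - r(K-D) = 1 + \deg(D) - g = 0$; since $D$ is unwinnable, $r(D) = -1$, so $r(K-D) = -1$ and $K - D$ is unwinnable. It remains to show $(K-D) + v$ is winnable for every $v \in V$. The key move is to apply Riemann--Roch not to $D$ but to $D - v$: since $\deg(D-v) = g-2$ and $K - (D - v) = (K - D) + v$, we get $r(D-v) - r\bigl((K-D)+v\bigr) = 1 + (g-2) - g = -1$. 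Because $D - v \le D$ and $D$ is unwinnable, $D - v$ is unwinnable, so $r(D-v) = -1$; therefore $r\bigl((K-D)+v\bigr) = 0 \ge 0$, i.e. $(K-D)+v$ is winnable. Thus $K-D$ is maximal unwinnable.

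The converse is then free by symmetry: the forward implication applied to the divisor $K - D$ shows that if $K - D$ is maximal unwinnable then $K - (K-D) = D$ is maximal unwinnable. I do not expect a genuine obstacle here; once the degree identity $\deg D = g-1$ is invoked, the argument is just a couple of substitutions into Riemann--Roch. The one point worth flagging is that the maximality half does \emph{not} come from applying Riemann--Roch to $D$ itself, but to the shifted divisor $D - v$, which is precisely what converts ``$D$ unwinnable'' into ``$(K-D)+v$ winnable.''
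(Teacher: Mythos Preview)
Your proof is correct and is exactly the derivation the paper has in mind: the paper does not spell out a proof of this corollary but simply presents it as an easy consequence of Riemann--Roch (Theorem \ref{riemann-roch-graphs}) for the reader to work out, and your argument---applying Riemann--Roch first to $D$ to get unwinnability of $K-D$, then to $D-v$ to get winnability of $(K-D)+v$---fills in precisely those details. The paper does add one remark you might find interesting: historically this corollary is proved \emph{first}, via orientations, and Riemann--Roch is then deduced from it (see \cite{baker2007riemann}), so your deduction, while correct, ``says the story backwards.''
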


\begin{theorem}[{Clifford's Theorem, \cite[Corollary 3.5]{baker2007riemann} or \cite[Corollary 5.13]{corry2018divisors}}]
\label{clifford-theorem}
Suppose $D \in \textrm{Div}(G)$ is a divisor with $r(D) \geq 0$ and $r(K - D) \geq 0$, then we have $r(D) \leq \frac{1}{2}\deg(D)$.
\end{theorem}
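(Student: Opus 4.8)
The plan is to combine the superadditivity of rank (Corollary~\ref{rank-ineq}) with the Riemann--Roch formula (Theorem~\ref{riemann-roch-graphs}), applied both to the divisor $D$ and to the canonical divisor $K$. The whole argument is a short piece of bookkeeping once these two ingredients are available.

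First I would pin down $r(K)$. Since $K = \sum_{v \in V}(\val(v)-2)v$, its degree is $\deg(K) = \sum_{v} \val(v) - 2|V| = 2|E| - 2|V| = 2g-2$. Applying Riemann--Roch to the divisor $K$ gives $r(K) - r(K-K) = 1 + \deg(K) - g$, that is, $r(K) - r(0) = g-1$. Now $r(0) \geq 0$ because the zero divisor is effective, while removing a single chip from it yields a divisor of degree $-1$, which is unwinnable; hence $r(0) = 0$ and therefore $r(K) = g-1$.

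Next, since the hypotheses give $r(D) \geq 0$ and $r(K-D) \geq 0$, Corollary~\ref{rank-ineq} applies verbatim to the pair $D,\ K-D$ and yields
\[
g-1 = r(K) = r\bigl(D + (K-D)\bigr) \geq r(D) + r(K-D).
\]
Finally I would invoke Riemann--Roch once more, this time for $D$ itself, to write $r(K-D) = r(D) - 1 - \deg(D) + g$. Substituting into the previous inequality gives $g-1 \geq 2r(D) - 1 - \deg(D) + g$, which rearranges immediately to $\deg(D) \geq 2r(D)$, i.e.\ $r(D) \leq \frac{1}{2}\deg(D)$.

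I do not anticipate a genuine obstacle: the only places needing a moment's care are the evaluation $r(0)=0$, the degree computation $\deg(K) = 2g-2$ (both elementary), and checking that the hypotheses of Corollary~\ref{rank-ineq} coincide exactly with the standing assumptions $r(D),r(K-D)\geq 0$. Everything else is a direct substitution.
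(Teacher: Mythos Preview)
Your argument is correct and is exactly the standard derivation the paper has in mind: it presents Clifford's theorem as one of the ``consequences of Riemann--Roch'' left to the reader, and your combination of Corollary~\ref{rank-ineq} with Theorem~\ref{riemann-roch-graphs} (plus the computation $r(K)=g-1$) is the intended route. There is nothing to add.
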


We remark that, while Corollary \ref{cor:cannonical-max-unwinn} is an easy consequence of Riemann--Roch, in a sense that says the story backwards. Indeed, the original proof of Riemann--Roch in \cite{baker2007riemann} deduces it as a consequence of this Corollary, which is first demonstrated with the aid of orientations; see the bottom of p. 15 of \cite{baker2007riemann}.

\begin{corollary}
\label{cor:ranks-determination}
Let $D \in \textrm{Div}(G)$.
\begin{enumerate}
\item If $\deg(D) < 0$, then $r(D) = -1$.
\item If $0 \leq \deg(D) \leq 2g - 2$, then $r(D) \leq \frac{1}{2}\deg(D)$.
\item If $\deg(D) > 2g - 2$, then $r(D) = \deg(D) - g$.
\end{enumerate}
\end{corollary}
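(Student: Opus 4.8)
The plan is to derive all three parts from Riemann--Roch (Theorem \ref{riemann-roch-graphs}) and Clifford's Theorem (Theorem \ref{clifford-theorem}), after first recording the one elementary fact that organizes the whole argument: the canonical divisor of Definition \ref{def:canonical} has degree $\deg(K) = \sum_{v \in V}(\val(v) - 2) = 2|E| - 2|V| = 2g - 2$, using the handshake identity $\sum_{v \in V}\val(v) = 2|E|$ and $g = |E| - |V| + 1$ (Definition \ref{def_genus}). With this in hand, the three degree ranges $\deg(D) < 0$, $0 \le \deg(D) \le 2g - 2$, and $\deg(D) > 2g - 2$ are precisely the ranges in which $\deg(K - D)$ is $> 2g - 2$, in $[0, 2g-2]$, and $< 0$ respectively, so the corollary is symmetric in $D \leftrightarrow K - D$ and each part can feed the others through Riemann--Roch.

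For part (1), I would note that script-firing preserves degree (Definition \ref{def_script-firing}) while every effective divisor has nonnegative degree; hence a divisor of negative degree is not linearly equivalent to any effective divisor, so $|D| = \emptyset$ and $r(D) = -1$ by clause (1) of Definition \ref{rank-def}. This is self-contained and serves as a lemma for part (3): if $\deg(D) > 2g - 2$ then $\deg(K - D) = (2g - 2) - \deg(D) < 0$, so part (1) gives $r(K - D) = -1$, and substituting into $r(D) - r(K - D) = 1 + \deg(D) - g$ yields $r(D) = \deg(D) - g$ at once.

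Part (2) requires a short case split. If $r(D) = -1$, then $r(D) \le 0 \le \tfrac{1}{2}\deg(D)$ since $\deg(D) \ge 0$, and we are done. Otherwise $r(D) \ge 0$. If moreover $r(K - D) \ge 0$, Clifford's Theorem applies verbatim and gives $r(D) \le \tfrac{1}{2}\deg(D)$. The remaining situation is $r(D) \ge 0$ but $r(K - D) = -1$; here Clifford's hypothesis fails, so instead Riemann--Roch gives $r(D) = \deg(D) - g$, and since $\deg(D) \le 2g - 2 \le 2g$ we get $\deg(D) - g \le \tfrac{1}{2}\deg(D)$, which closes the bound.

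The main (and only) obstacle is precisely this last sub-case of part (2): one has to notice that Clifford cannot be invoked when $r(K - D) = -1$ and instead combine the exact Riemann--Roch value with the degree constraint $\deg(D) \le 2g$. Everything else is the degree computation for $K$ together with direct substitution into the Riemann--Roch identity.
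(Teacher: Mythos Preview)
Your argument is correct. The paper does not actually supply a proof of this corollary; it is listed among the ``consequences of Riemann--Roch'' that the reader is invited to verify, so your derivation from Theorem~\ref{riemann-roch-graphs} and Theorem~\ref{clifford-theorem} is exactly the intended route, and your handling of the sub-case $r(D)\ge 0$, $r(K-D)=-1$ in part~(2) via the exact Riemann--Roch value is the right way to close the gap left by Clifford's hypothesis.
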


\subsection{Computing rank in the package}

Here's an illustration of our \texttt{chipfiring} package API for efficient rank calculation:

\begin{lstlisting}[language=Python]
from chipfiring import rank

# Compute the rank with optional optimizations
# Setting optimized=True enables the use of theoretical results (e.g., Corollary 4.23)
# to short-circuit expensive EWD calls when a shortcut applies.
# Note: While faster, this mode may not return intermediate structures 
# like q-reduced divisors or orientations.
result = rank(divisor, optimized=True)

# Access the rank and logs
print("Rank:", result.rank)
print(result.get_log_summary())
\end{lstlisting}

\subsection{Gonality}

An interesting question arising from ranks of divisors is to determine the \emph{gonality} of a given finite graph, or specific families of graphs. We briefly state the question here and show how to study it in our package. An excellent introduction to this subject is \cite{beougher2023chip}, from which we have chosen some worked examples below.

\begin{definition}
The \emph{gonality} of a graph $G$ is the minimal degree of a divisor $D$ of rank at least $1$.
\end{definition}

An important open problem, motivated by algebraic geometry, is Baker's gonality conjecure \cite[Conjecture 3.10(1)]{baker2008}, which states that every graph of genus $g$ has gonality at most $\lfloor \frac12 g + \frac32 \rfloor$.
Gonality is difficult to compute in general, especially for large graphs, but for small graphs it can be determined by exhaustive search. Such a search is implemented in \texttt{chipfiring.CFGonality}. For example, our package confirms the gonalities of platonic solid graphs, as reported in \cite{beougher2023chip}. While this search take less than a few seconds for the tetrahedron, cube, and octahedron, the dodecahedron and tetrahedron take considerably more time, due to the number of divisors that must be considered. For example, the computation below took 10 minutes and 24 seconds on a Macbook Pro M2.

\begin{lstlisting}[language=Python]
# Compute the gonalities of Platonic solid graphs
from chipfiring import tetrahedron, cube, octahedron,  dodecahedron, icosahedron, gonality

# Platonic solid graphs (available for research purposes)
tetra = tetrahedron()  # K_4
cube_graph = cube()    # 3-regular, 8 vertices
octa = octahedron()    # Complete tripartite K_{2,2,2}
dodec = dodecahedron()  # 3-regular, 20 vertices
icos = icosahedron() # 5-regular, 12 vertices

print(gonality(tetra).gonality) # Output: 3
print(gonality(cube_graph).gonality) # Output: 4
print(gonality(octa).gonality) # Output: 4
print(gonality(dodec).gonality) # Output: 6
print(gonality(icos).gonality) # Output: 9
\end{lstlisting}

The reason that the word ``gonality'' must be written twice in each in the computations above is that \texttt{gonality(G)} returns an object that records not just the gonality computed, but a log of the computation and an example of a rank-$1$ divisor of minimum degree. For example, the command $\mathtt{D = gonality(tetra).winning\_strategies[0]}$ will give a degree $3$ divisor of rank $1$ on the tetrahedron graph.

While the modules (\texttt{CFCombinatorics}, \texttt{CFGonality}, \texttt{CFGonalityDhar}, \texttt{CFPlatonicSolids}) are functional, they may be subject to future modifications as the underlying theory and implementation continue to develop.

\subsection{An extended example: gonalities of chains of loops}

As an example of a more intensive computation that our package makes possible, we give below a longer example of a graph that has attracted consderable interested in divisor theory of graphs: the chain of loops (or chains of cycles). Such graphs first gained attention in \cite{cools2012tropical}, where it was shown that, if chosen sufficiently generically, they are ``Brill--Noether general;'' among other this means that their gonality is exactly $\lfloor \frac12 g + \frac32$. Later, the third author developed tools in \cite{pflChains} that suffice to determine ranks of all divisors on all chains of loops. Later, Jensen and Sawyer \cite{jensenSawyer} used these tools to classify the gonalities of all chains of cycles. 

The code below shows how to use the \texttt{chipfiring} package to verify the Jensen-Sawyer classificaiton of gonalities of chains of cycles. For simplicity and to limit the number of cases, we investigate the following special case.

\begin{example}
Consider a genus $5$ chain of cycles of the following form. $G$ consists of $5$ cycles, where the $n$th cycle consists of $m_n$ vertices $z_{n,0}, \cdots, z_{n,m_n-1}$ arranged in a cycle, and there is a single edge from each cycle to the next, joining $z_{n,0}$ to $z_{n+1,m_{n+1}-1}$. The numbers $m_1, \cdots m_5$ are called the \emph{cycle lengths} or \emph{torsion orders}. According to the classification in \cite{jensenSawyer}, the gonality of this graph is 
\[ \operatorname{gon}(G) = \begin{cases}
2 & \mbox{ if } m_2 = m_3 = m_4 = 2,\\
3 & \mbox{ if } \operatorname{gon}(G) \neq 2 \mbox{ and } m_2 =2, m_3 = 3, \mbox{ or } m_4 = 2,\\
4 & \mbox{ otherwise.}
\end{cases}\]
\end{example}

The following code demonstrates how to use our package to build a chain of cycles of this kind, compute its gonaltiy, and verify that it accords with the classification of Jensen and Sawyer. In order to reduce to finitely many cases, we limit all cycle lengths to $2,3,4,$ or $5$.

\begin{lstlisting}[language=Python]
from chipfiring import CFGraph
from chipfiring import gonality
import itertools

def chain(cycle_lengths : list[int]):
    if not all(isinstance(l, int) and l >= 2 for l in cycle_lengths):
        raise ValueError("All cycle lengths must be integers greater than or equal to 2.")
    vertices = { f"z_{i+1}_{j}"  for i,l in enumerate(cycle_lengths) for j in range(l)}
    # edges = [(f"z_{i+1}_{j}",f"z_{i+1}_{(j+1)%l}",1) for i,l in enumerate(cycle_lengths) for j in range(l)]
    edges = []
    for i,l in enumerate(cycle_lengths):
        if l == 2:
            edges.append((f"z_{i+1}_0", f"z_{i+1}_1", 2))
        else:
            for j in range(l):
                edges.append((f"z_{i+1}_{j}", f"z_{i+1}_{(j+1)%l}", 1))
    for i,l in enumerate(cycle_lengths):
        if i == 0 : continue
        edges.append( (f"z_{i}_0", f"z_{i+1}_{l-1}",1) )
    return CFGraph(vertices,edges)

def expectedGonality(cycle_lengths : list[int]) -> int:
    if not all(isinstance(l, int) and l >= 2 for l in cycle_lengths):
        raise ValueError("All cycle lengths must be integers greater than or equal to 2.")
    if cycle_lengths[1] == 2 and cycle_lengths[2] == 2 and cycle_lengths[3] == 2:
        return 2
    elif cycle_lengths[1] == 2 or cycle_lengths[2] == 3 or cycle_lengths[3] == 2:
        return 3
    else:
        return 4

length_choices = [2,3,4,5]

for cycle_lengths in itertools.product(length_choices, repeat=5):
    cycle_lengths = list(cycle_lengths)
    G = chain(cycle_lengths)
    expected = expectedGonality(cycle_lengths)
    computed = gonality(G).gonality
    print(cycle_lengths, f"Expected: {expected}", f"Computed: {computed}",expected == computed)
\end{lstlisting}

As one would hope, the output of this code consists of $1024$ lines, one for each choice of cycle lengths, confirming that our package computed the same gonality as that predicted by theoretical means. This computation is expensive, since it requires a considerable exhaustive search. On a Macbook Pro M2, this computation was completed in 167 minutes and 14 seconds.

\section{Conclusion and Future Directions}
\label{sec:conclusion}

This paper introduces the \texttt{chipfiring} Python package, a dedicated computational toolkit designed explicitly for the mathematical analysis of chip-firing games on multigraphs. We have developed specialized data structures, algorithms, and theoretical frameworks useful for exploration and experimentation within this domain.

Key contributions of the \texttt{chipfiring} package include specialized object-oriented classes such as \texttt{CFGraph}, \texttt{CFDivisor}, \texttt{CFLaplacian}, \texttt{CFOrientation}, \texttt{CFConfig}, \texttt{CFiringScript}, and \texttt{CFDataProcessor} tailored specifically for chip-firing dynamics. Algorithmically, we have implemented efficient methodologies such as Dhar's burning algorithm (\texttt{CFDhar}), the Greedy algorithm (\texttt{CFGreedyAlgorithm}), and rank calculation techniques (\texttt{CFRank}). These tools collectively also facilitate rapid and precise analyses of winnability (\texttt{EWD, is\_winnable}), linear equivalence (\texttt{linear\_equivalence}), and q-reduction (\texttt{q\_reduction, is\_q\_reduced}). Furthermore, comprehensive documentation, intuitive APIs, and interactive visualization modules (\texttt{CFVisualizer}) augment the educational and research applicability of the package, making it accessible to a broad audience.

\subsection{Current Scope and Limitations}

The current scope of the \texttt{chipfiring} package focuses primarily on finite, undirected multigraphs without loops, supporting fundamental mathematical operations pertinent to chip-firing theory. While extensive, our current implementation is optimized primarily for standard scenarios encountered frequently in graph theory, algebraic geometry, and combinatorial optimization contexts. Presently, the package’s computational complexity allows efficient analyses on medium-sized graphs but faces scalability challenges when handling extremely large-scale systems, highlighting an area ripe for future optimization.

Moreover, the package currently does not fully integrate advanced symbolic computation tools, limiting exact arithmetic operations. The visualization functionalities provided are adequate for typical educational and research needs but could be further enhanced to represent more complex and dynamic chip-firing processes clearly.

\subsection{Future Directions}

Several exciting avenues exist for future development and enhancement of the \texttt{chipfiring} package. One promising direction is the extension to more general classes of graphs, including directed, weighted, and loop-containing variants, which would greatly broaden the package's applicability and research scope. Furthermore, incorporating symbolic computation capabilities and sandpile models \cite{perkinson2011primeralgebraicgeometrysandpiles} could significantly enhance mathematical rigor and exactness, particularly valuable in theoretical explorations requiring precise arithmetic operations.

Additionally, performance optimization using parallel computing techniques, GPU acceleration, and efficient sparse matrix computations would greatly enhance scalability and facilitate the handling of larger and more complex graphs.

On the theoretical front, integrating more extensive modules supporting ongoing research in gonality computation, divisor specialization, Brill-Noether theory \cite{cools2012tropical, pflueger2017brill}, and advanced combinatorial bounds \cite{osserman2017limit} will keep the package relevant to current research topics.

\subsection{Community Engagement and Availability}\label{sec:community}

The \texttt{chipfiring} package is actively maintained and openly available to the mathematical and computational community. We strongly encourage community engagement through feedback, contributions, and feature requests via the project's GitHub repository. This package is licensed under the permissive MIT License.

\section{Acknowledgments}

We thank Ralph Morrison for feedback and guidance. Special thanks to the Amherst College Department of Mathematics for supporting this research project. We also acknowledge the influence of existing mathematical software packages (i.e. \texttt{numpy, networkx, matplotlib, pandas, dash}) that inspired our approach to specialized scientific computing tools. Portions of the code were developed with the assistance of AI coding tools, including Cursor with Gemini 2.5 Pro, Claude 3.5 Sonnet, and Claude 3.7 Sonnet.

\bibliographystyle{unsrt}
\bibliography{references} 

\newpage
\begin{appendices}
\section{Data Import Format} \label{appendix:import}

\subsection{JSON Import Formats}

\subsubsection{Graph (\texttt{CFGraph})}
\begin{verbatim}
{
    "vertices": ["vertex1", "vertex2", "vertex3"],
    "edges": [
        ["vertex1", "vertex2", valence],
        ["vertex2", "vertex3", valence]
    ]
}
\end{verbatim}
\begin{itemize}
    \item \textbf{vertices}: Array of vertex names (strings)
    \item \textbf{edges}: Array of [source, target, valence] tuples where valence is a positive integer
\end{itemize}

\subsubsection{Divisor (\texttt{CFDivisor})}
\begin{verbatim}
{
    "graph": {
        "vertices": ["vertex1", "vertex2"],
        "edges": [["vertex1", "vertex2", 1]]
    },
    "degrees": {
        "vertex1": 2,
        "vertex2": -1
    }
}
\end{verbatim}
\begin{itemize}
    \item \textbf{graph}: Graph object with vertices and edges structure
    \item \textbf{degrees}: Object mapping vertex names to integer degrees (can be negative)
\end{itemize}

\subsubsection{Orientation (\texttt{CFOrientation})}
\begin{verbatim}
{
    "graph": {
        "vertices": ["vertex1", "vertex2"],
        "edges": [["vertex1", "vertex2", 1]]
    },
    "orientations": [
        ["source", "sink"],
        ["vertex1", "vertex2"]
    ]
}
\end{verbatim}
\begin{itemize}
    \item \textbf{graph}: Graph object with vertices and edges structure
    \item \textbf{orientations}: Array of [source, sink] pairs defining edge directions
\end{itemize}

\newpage

\subsubsection{Firing Script (\texttt{CFiringScript})}
\begin{verbatim}
{
    "graph": {
        "vertices": ["vertex1", "vertex2"],
        "edges": [["vertex1", "vertex2", 1]]
    },
    "script": {
        "vertex1": 3,
        "vertex2": 0
    }
}
\end{verbatim}
\begin{itemize}
    \item \textbf{graph}: Graph object with vertices and edges structure
    \item \textbf{script}: Object mapping vertex names to firing counts (non-negative integers)
\end{itemize}

\subsection{TXT Import Formats}

\subsubsection{Graph (\texttt{CFGraph})}
\begin{verbatim}
VERTICES: vertex1, vertex2, vertex3
EDGE: vertex1, vertex2, valence
EDGE: vertex2, vertex3, valence
\end{verbatim}
\begin{itemize}
    \item \textbf{VERTICES}: Comma-separated list of vertex names
    \item \textbf{EDGE}: Format \texttt{source, target, valence} where valence is a positive integer
\end{itemize}

\subsubsection{Divisor (\texttt{CFDivisor})}
\begin{verbatim}
GRAPH_VERTICES: vertex1, vertex2, vertex3
GRAPH_EDGE: vertex1, vertex2, valence
GRAPH_EDGE: vertex2, vertex3, valence
---DEGREES---
DEGREE: vertex1, degree_value
DEGREE: vertex2, degree_value
\end{verbatim}
\begin{itemize}
    \item \textbf{GRAPH\_VERTICES}: Comma-separated list of vertex names
    \item \textbf{GRAPH\_EDGE}: Format \texttt{source, target, valence}
    \item \textbf{---DEGREES---}: Separator line
    \item \textbf{DEGREE}: Format \texttt{vertex\_name, degree\_value} (degree can be negative)
\end{itemize}

\subsubsection{Orientation (\texttt{CFOrientation})}
\begin{verbatim}
GRAPH_VERTICES: vertex1, vertex2, vertex3
GRAPH_EDGE: vertex1, vertex2, valence
GRAPH_EDGE: vertex2, vertex3, valence
---ORIENTATIONS---
ORIENTED: source, sink
ORIENTED: vertex1, vertex2
\end{verbatim}
\begin{itemize}
    \item \textbf{GRAPH\_VERTICES}: Comma-separated list of vertex names
    \item \textbf{GRAPH\_EDGE}: Format \texttt{source, target, valence}
    \item \textbf{---ORIENTATIONS---}: Separator line
    \item \textbf{ORIENTED}: Format \texttt{source, sink} defining edge direction
\end{itemize}

\subsubsection{Firing Script (\texttt{CFiringScript})}
\begin{verbatim}
GRAPH_VERTICES: vertex1, vertex2, vertex3
GRAPH_EDGE: vertex1, vertex2, valence
GRAPH_EDGE: vertex2, vertex3, valence
---SCRIPT---
FIRING: vertex_name, firing_count
FIRING: vertex1, 3
\end{verbatim}
\begin{itemize}
    \item \textbf{GRAPH\_VERTICES}: Comma-separated list of vertex names
    \item \textbf{GRAPH\_EDGE}: Format \texttt{source, target, valence}
    \item \textbf{---SCRIPT---}: Separator line
    \item \textbf{FIRING}: Format \texttt{vertex\_name, firing\_count} (non-negative integer)
\end{itemize}
\end{appendices}

\end{document}